\newtheorem{theorem}{Theorem}[section]
\newtheorem{proposition}[theorem]{Proposition}
\newtheorem{lemma}[theorem]{Lemma}
\newtheorem{definition}[theorem]{Definition}
\newtheorem{remark}[theorem]{Remark}
\newtheorem{example}[theorem]{Example}
\def\d{\hbox{\bf d}{} }
\def\Q{\hbox{\bf $\mathbb{Q}$}{} }
\def\T{\hbox{\bf T}{} }
\def\OCP{\hbox{\bf OCP}{} }
\def\WCN{\hbox{\bf WCN}{} }
\def\MP{\hbox{\bf MP}{} }
\def\KS{\hbox{\bf KS}{} }
\newcommand{\ra }{\rightarrow }
\newenvironment{proof}{\mbox{\bf Proof.}}{\mbox{$\dashv$}\bigskip}
\begin{document}

\begin{center}
{\Large\bf The Temporal Continuum}\\
\vspace{.25in}
{\bf Mohammad Ardeshir~\footnote{
Sharif University of Technology, 11365-9415, Tehran, Iran, mardeshir@sharif.edu} ~~~~~ Rasoul Ramezanian~\footnote{University of Lausanne, Internef, 1015 Lausanne,
Switzerland, rasoul.ramezanian@unil.ch} 
 } 
\end{center}
\begin{abstract}
\noindent  The continuum has been one of the most controversial
topics in mathematics since the time of the Greeks. Some mathematicians, such as
Euclid and Cantor, held the position that a line is composed of
points, while others, like Aristotle, Weyl and Brouwer, argued that a line is not composed of points but rather a matrix of
a continued insertion of points. In spite of this disagreement on
the structure of the continuum, they did distinguish the
{\em temporal line} from the {\em spatial line}.  In this paper, we  argue
that  there is indeed a difference between the intuition of the
spatial continuum and the intuition of the temporal continuum. The
main primary aspect of the  temporal continuum, in
contrast with the spatial continuum, is the notion of  {\em orientation}.\\

The continuum has usually been mathematically modeled by 
\emph{Cauchy} sequences and the \emph{Dedekind} cuts. While in the
first model, each point can be approximated by rational numbers,
in the second one, that is not  possible constructively. We argue
that points on the temporal continuum cannot be approximated by
rationals as a temporal point is a \emph{flow} that sinks to the past. In our model, the continuum is a collection of constructive
\emph{Dedekind} cuts, and we define two topologies for 
temporal continuum: 1. \emph{oriented} topology and 2. the
\emph{ordinary} topology. We prove that every total function from
the \emph{oriented}
topological space to the \emph{ordinary} one is continuous.   \\
\vspace*{1 cm}
\ \\
\end{abstract}

\section{Introduction}
\label{Int}

 Mathematicians have long been divided into two philosophical camps regarding the structure of the continuum. Some, like Euclid \cite{kn:Eud} and Cantor \cite{kn:ca}, view the continuum as a composition of points. On the other hand, others, including Aristotle \cite{kn:Ars}, Weyl \cite{kn:BW,kn:weyl}, and Brouwer \cite{kn:Br}, consider it as a \emph{whole}, not composed of points.

In this paper, we aim to highlight another distinction, namely, between what we refer to as \emph{the spatial continuum} and \emph{the temporal continuum}. We will model the temporal continuum using a specific type of Dedekind cuts that illustrate this difference. 

 The main distinction between the spatial continuum and the temporal continuum lies in the notion of \emph{orientation}: the temporal continuum is oriented, moving exclusively from the past to the future, and it is impossible to move in the \emph{opposite} direction\footnote{In this paper, we \emph{assume} that `time is directed' without delving into detailed arguments. Numerous works have explored the direction of time from both physical and philosophical perspectives; for further reading, refer to \cite{kn:im, kn:Rei}.}. Our subjective experience reinforces this clear differentiation between the future and the past. We can vividly remember the past, whereas the future remains uncertain. Our ignorance of future events, including our own choices and actions, contributes to the concept of \emph{free will}. Conversely, we lack the ability to alter the past due to the absence of free will in that direction.

\begin{quote}{\footnotesize We can affect the future by our actions: so why can we not
by our actions affect the past? The answer that springs to mind is
this: you cannot change the past; if a thing has happened, it has
happened, and you cannot make it not to have
happened~\cite{kn:DU}.}
\end{quote}
 The primary objective of this paper is to propose a model for the continuum that incorporates the concept of orientation. To achieve this, we will introduce a formalization of orientation through a specific type of Dedekind cuts, which we will refer to as oriented Dedekind cuts.

 Brouwer and Weyl emphasized two essential aspects of the intuitive continuum, which are \emph{inexhaustibility} and \emph{non-discreteness} (also referred to as \emph{non-atomicity}) (\cite{kn:va}, page 86). The concept of choice sequences is motivated by these characteristics. Non-lawlike sequences signify the continuum's inexhaustibility, and identifying points with unfinished sequences of nested intervals reflects its non-discreteness (\cite{kn:va}, page 87).  Defining points as choice sequences of nested intervals captures the notion that a point on the continuum is not a dimension-less atom but rather a \emph{halo}.

As choice sequences are non-predetermined and unfinished entities, the value of every total function from choice sequences to natural numbers relies on an initial segment of sequences. Brouwer leverages this property to demonstrate that every total function over the continuum is continuous.

 Both the spatial continuum and the temporal continuum share the aforementioned aspects. However, a defining characteristic of the temporal continuum is the concept of \emph{orientation}. "Duration" is a continuous \emph{becoming}, always moving towards the future. In the spatial continuum, movement is possible in both directions. To introduce witnesses for points on the spatial line, one may proceed by introducing witnesses for all points in the segment $[-1,0]$, then for points in the segment $[1,2]$, and subsequently for points in the segment $[0,1]$, and so on. There is no requirement to adhere to any specific direction.
However, for the temporal line, this approach is not feasible. Let the locations of a moving ball in space serve as witnesses for moments. If we are at the moment $t_0$, and $t_1$ is a future moment, we cannot determine the location of the ball at time $t_1$ at the present moment. We must wait until we reach that moment, and only then will the locations of the ball in all moments before $t_1$ be determined. \emph{We are constantly moving towards the future}. The following \emph{assertion} may provide further clarity regarding our understanding of the notion of orientation. We leverage notion of orientation to demonstrate that every total function over the temporal continuum is continuous.

%%%%%%%%%%%%%%%%%%%%%%%%%%%%%%%%%%%%
%%%%%%%%%%%%%%%%%%%%%%%%%%%%%%%%%%%
%%%%%%%%%%%%%%%%%%%%%%%%%%%%%%%%%%%%%%%%

 In considering how we experience a point on the temporal continuum, we can conceptualize a point $t$ as the moment of occurrence of an event $E$:
\begin{center}
    t:="The moment of the occurrence of the event E"
\end{center}
For instance, assume it is currently 8:00 am, and I am aware that the event $E$ will occur sometime after 8:00 am but before 12:00 pm. As time passes, I experience the point $t$ in the following manner: I examine the occurrence of $E$ at several consecutive times. To do this, I plan a strategy and choose specific moments to observe the event. The choice of these moments is up to me.

For example, I may decide to examine the occurrence of $E$ at times like 10:15 am, 10:20 am, 10:40 am, 11:30 am, and so on. Suppose it is now 11:30 am, and the last time I observed the occurrence of $E$ was at 10:40 am, where I found no evidence of it happening yet. Now, suppose it is 11:30 am again, and I observe the event and find evidence of its occurrence. However, at this point, I \emph{cannot move to the past} to examine its occurrence at any previous moments.

All the information I have about $t$ is that it lies \emph{sometime} within the interval $(10:40, 11:30]$. I cannot distinguish the moment $t$ from other moments within this interval. In other words, \emph{the point $t$ has "sunk back" to the past, and I cannot experience it any further}. It is \emph{absolutely undecidable} whether $t$ is before 11:00 am or after it. The moment $t$ cannot be estimated more accurately, and consequently, I cannot approximate $t$ using rational moments.

Furthermore, if I have a constructive method to introduce witnesses for all moments on the temporal line, then the witness for the moment $t$ is also the witness for all moments within the interval $(10:40, 11:30]$. Thus, a point on the temporal continuum is like a \emph{flow} that sinks back to the past eventually.

We consider the above \emph{explanation} as our understanding of the notion of \emph{orientation} and attempt to formalize it in this paper. In our perspective, we regard $t$ as a \emph{flow} from the past, meaning it encompasses all moments that have occurred before $t$. If $t$ is a past moment, we cannot obtain any additional information about which moments belong to this collection and which ones do not. If $t$ is a future moment, we still have time to gather data about the elements of the collection before $t$ eventually sinks back to the past. During this time, we may examine the membership status of certain moments. However, as soon as $t$ sinks back to the past, we can no longer acquire any new data.

This passage discusses the experience of a point on the temporal continuum, emphasizing the concept of \emph{orientation} and how it is related to the passage of time and our ability to observe and distinguish moments. It also introduces the idea of a point being a \emph{flow} from the past, capturing the continuous and irreversible nature of time.

%%%%%%%%%%%%%%%%555555

 Brouwer's perspective on the continuum is that it is intuitively given as a flowing medium of cohesion between two events, not comprised of points (events) itself, but rather an inexhaustible matrix allowing for a continued insertion of points. Originally, there are no points on the continuum, but we can construct points on it or indicate a position within it. Brouwer emphasized that the intuition of the continuum is the intuition of the medium of cohesion between two events. He distinguishes between two things: the medium of cohesion (first thing) and the continuum (second thing), or \emph{primum} and \emph{secundum} as he puts it (\cite{kn:jk}, page 70). Brouwer utilized the category of choice sequences and the continuity principle to provide a mathematical analysis of continua. Using choice sequences to define points on the continuum, a point is modeled as a \emph{halo}. Consequently, to construct a witness for a point on the continuum, the witness is constructed for a halo around it. Brouwer proposed his famous continuity principle for choice sequences and used it to prove that every total function over the real line is continuous (\cite{kn:TD}, page 305).

Motivated by the characteristic aspect of the temporal continuum, i.e., the \emph{orientation}, as explained above, we introduce oriented cuts to model points on the temporal continuum as \emph{flows}. The traditional modeling of the continuum using the \emph{Cauchy} fundamental sequences of rationals \cite{kn:TD} allows every real number to be approximated by rational numbers. However, since moments on the temporal continuum sink back to the past, they \emph{cannot} be approximated by rationals. Consequently, the \emph{Cauchy} sequences appear unsuitable for modeling the temporal continuum. Instead, Dedekind cuts prove to be more appropriate for this purpose. Among the constructive Dedekind lines introduced in \cite{kn:TD}, only $\mathbb{R}^d$ can be positively approximated by rationals due to its \emph{locatedness} property (\cite{kn:TD}, page 270), while $\mathbb{R}^e$ and $\mathbb{R}^{be}$ can be approximated by rationals only up to the double negation via the \emph{strong monotonicity} property. We demonstrate that the collection of oriented Dedekind cuts cannot be approximated by rationals, as desired in this case. As discussed earlier, points on the temporal continuum are like flows that sink back to the past. Once they have sunk back, we cannot acquire new data about them, and thus, they cannot be approximated by rationals.

%%%%%%%%%%%%%%%%%%%%%

 In addition to our main objective of introducing a mathematical model for the temporal continuum, we also aim to demonstrate that every total function over the temporal line is continuous, similar to the Brouwerian real line. For this purpose, we require a \emph{continuity principle} for the Dedekind cuts. Hence, we propose a principle called the \emph{oriented continuity principle} (\OCP) and define two topologies for the temporal continuum: 1. the \emph{oriented topology} and 2. the \emph{ordinary topology}. Utilizing \OCP, we establish that every total function from the oriented topological space to the ordinary one is continuous.

The
sequel of the paper is organized as follows. In
section~\ref{orientedcut}, we define the notion of the oriented
cuts, and   justify a continuity principle for them. The
oriented continuity principle, $\OCP$, expresses formally the
feature we have in mind about a continuity principle.  We show
that the oriented reals cannot be approximated by rationals. in
section~\ref{topforc}, the \emph{oriented topology} and the
\emph{ordinary topology} are defined, and then some consequences
of \OCP for the temporal continuum are demonstrated.

We argue in the context of constructive logic. As far as possible,
the standard notations in \cite{kn:TD} are used in this paper.

%%%%%%%%%%%%%%%
%%%%%%%%%%%%%%%%%%55

\section{Oriented Cuts}\label{orientedcut}

In this section, we introduce a type of left cuts of $\mathbb{Q}$,
named \emph{oriented cuts},  and  state the oriented
continuity principle.

\begin{definition}\label{o-cut}
We let $\mathbb{R}^o$ be the set of all strictly increasing
bounded sequences  of rational numbers, i.e.
$\alpha\in\mathbb{R}^o$ if and only if $\exists M\forall
n(\alpha(n)<\alpha(n+1)<M)$. For all $\alpha,\beta\in
\mathbb{R}^o$ we define:
\begin{itemize}
\item $\alpha\leq \beta$
if and only if  $\forall m\exists n (\alpha(m)<\beta(n))$,
\item  $\alpha< \beta$
if and only if $\exists n\forall m (\alpha(m)<\beta(n))$, and
\item
$\alpha=^o\beta$ if and only if  $\alpha\leq \beta\wedge\beta\leq
\alpha$.

\end{itemize} We call the set $\mathbb{R}^o$,
regarding equality $=^o$, the set of all oriented reals
\emph{(}cuts\emph{)}.
\end{definition}

  Oriented reals, bounded  strictly increasing sequences of
rationals, are supposed to model the passing of  time. The
increase of sequences reflects the  passing of time and the strictness of
increase ensures that time cannot rest.

\begin{definition} For a rational number $q\in \Q$,  let
$\hat{q}$ be the oriented real defined by
$\hat{q}(n)=q-\frac{1}{n+1}$, for all $n$.  Let
$q\rightarrow \hat{q}$ be the mapping which assigns the oriented
real $\hat{q}$ to $q$. For each $q\in \mathbb{Q}$ and
$\alpha\in\mathbb{R}^o$, we say

\begin{itemize}
\item[a.] $q<\alpha$ if and only if $\hat{q}<\alpha$,

\item[b.]$\alpha\leq q$  if and only if $\alpha\leq \hat{q}$,

\item[c.] $\alpha< q$  if and only if  $\alpha< \hat{q}$, and

\item[d.]$q\leq \alpha$ if and only if $\hat{q}\leq \alpha$.
\end{itemize}

\end{definition}

%%%%%%%%%%%%%%%%%%%%%%%%%%%%%%

\begin{proposition}\label{inequ} For $q\in \Q$ and
$\alpha\in \mathbb{R}^o$,

\begin{itemize}
\item[$(1)$]$q<\alpha$ if and only if $ \exists n(q<\alpha(n))$,

\item[$(2)$] $\alpha\leq q$  if and only if $\forall
n(\alpha(n)<q)$,

\item[$(3)$]$\alpha< q$  if and only if $\exists p\in
\mathbb{Q}(p<q\wedge\alpha\leq p)$

\item[$(4)$] $q\leq \alpha$ if and only if $\forall p\in
\mathbb{Q} (p<q\rightarrow p<\alpha)$.
\end{itemize}
\end{proposition}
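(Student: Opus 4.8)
The plan is to prove the four equivalences in turn, unwinding the definitions of $<$ and $\leq$ between rationals and oriented reals via the auxiliary map $q\mapsto\hat q$, and exploiting the defining properties of $\hat q$, namely that $\hat q$ is strictly increasing with $\hat q(n)=q-\frac{1}{n+1}\to q$ from below. Throughout I will use freely that $\hat q(n)<q$ for all $n$, that for any rational $p<q$ there is some $n$ with $p<\hat q(n)$, and conversely that $p<\hat q(n)$ for some $n$ implies $p<q$; these are elementary facts about rationals and require no constructive subtlety.

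For (1), $q<\alpha$ means $\hat q<\alpha$, i.e. $\exists n\,\forall m\,(\hat q(m)<\alpha(n))$. From such an $n$, since $\hat q(m)\to q$, I would argue $q\leq\alpha(n)$ is not quite what we get directly; instead I would pick a witness a bit more carefully. The cleaner route: if $\exists n(q<\alpha(n))$, then since $\hat q(m)<q<\alpha(n)$ for all $m$, we get $\hat q<\alpha$. Conversely, if $\hat q<\alpha$, fix $n$ with $\forall m(\hat q(m)<\alpha(n))$; then also $\hat q(m)<\alpha(n+1)$ for all $m$, and since $\alpha(n)<\alpha(n+1)$ and $\hat q(m)=q-\frac1{m+1}$ can be taken with $q-\frac1{m+1}$ arbitrarily close to $q$, for large $m$ we have $\alpha(n)<\hat q(m)$ would be the obstruction — so I should instead observe: from $\hat q(m)<\alpha(n)$ for \emph{all} $m$ and $\hat q(m)\uparrow q$, constructively we cannot conclude $q\leq\alpha(n)$ with a strict gap; the right move is to use the strictness in $\hat q<\alpha$ together with density to land $q$ strictly below some later $\alpha(n')$. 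This packaging — converting "$\forall m$, $\hat q(m)<\alpha(n)$" into "$q<\alpha(n')$ for some $n'$" using the strict increase of $\alpha$ and the convergence $\hat q(m)\uparrow q$ — is the main technical obstacle, and it is exactly where the choice of the particular sequence $\hat q(n)=q-\frac1{n+1}$ (with its definite, computable rate) is used.

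For (2), $\alpha\leq q$ means $\alpha\leq\hat q$, i.e. $\forall m\,\exists n\,(\alpha(m)<\hat q(n))$; since $\hat q(n)<q$ always, this immediately gives $\forall m(\alpha(m)<q)$, and conversely if $\forall m(\alpha(m)<q)$ then for each $m$, since $\alpha(m)<q$ and $\hat q(n)\uparrow q$, there is $n$ with $\alpha(m)<\hat q(n)$, giving $\alpha\leq\hat q$. For (3), $\alpha<q$ means $\alpha<\hat q$, i.e. $\exists n\,\forall m(\alpha(m)<\hat q(n))$; taking $p=\hat q(n)$ we get a rational with $p<q$ and (by part (2)) $\alpha\leq p$, and conversely from $p<q$ with $\alpha\leq p$ we get, using density, a rational strictly between, hence some $\hat q(n)$ with $\alpha(m)<\hat q(n)$ for all $m$. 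For (4), $q\leq\alpha$ means $\hat q\leq\alpha$, i.e. $\forall m\,\exists n(\hat q(m)<\alpha(n))$; if $p<q$ then $p<\hat q(m)$ for some $m$ (by the convergence), and then $p<\hat q(m)<\alpha(n)$ for that $n$, so $p<\alpha(n)$, i.e. $p<\alpha$ by part (1); conversely, assuming $\forall p(p<q\to p<\alpha)$, apply it to each $p=\hat q(m)$ (which satisfies $\hat q(m)<q$, hence $p<q$ reading $<$ between rationals — here I must check the hypothesis is literally $p<q$ in $\Q$, which $\hat q(m)<q$ supplies) to get $\hat q(m)<\alpha$, i.e. $\exists n(\hat q(m)<\alpha(n))$ by part (1), which is exactly $\hat q\leq\alpha$.

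In writing this up I would do (2) first since it is cleanest and is used in (3), then (1), then (3) and (4), being careful that each direction is constructively valid — in particular never passing from $\forall m(\alpha(m)<q)$ to "$q\leq\alpha$ with a gap" or performing a case split on whether $q<\alpha$ or $\alpha\leq q$, which would not be permitted. The one genuinely delicate point, as noted, is the forward direction of (1): extracting a \emph{strict} inequality $q<\alpha(n')$ from the statement $\hat q<\alpha$, which I expect to handle by noting that $\exists n\,\forall m(\hat q(m)<\alpha(n))$ together with $\alpha(n)<\alpha(n+1)$ lets us take $n'=n+1$ and verify $\hat q<\widehat{\alpha(n)}<\alpha$ style reasoning, or more directly that $q\leq\alpha(n)$ in the weak sense plus strictness of $\alpha$ upgrades to $q<\alpha(n+1)$; I will present whichever of these is shortest once the exact inequalities are laid out.
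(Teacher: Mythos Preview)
Your approach is correct and matches the paper's: it too unwinds the definitions via $\hat q$, takes $p=\hat q(m)=q-\tfrac{1}{m+1}$ for the forward direction of (3), and for (4) instantiates the hypothesis at each $p=\hat q(m)$, exactly as you propose. The paper dismisses (1) and (2) as straightforward; your worry about the forward direction of (1) is resolved precisely by the move you name at the end---from $\forall m\,(q-\tfrac{1}{m+1}<\alpha(n))$ and decidability of order on $\mathbb{Q}$ one gets $q\le\alpha(n)$, whence $q\le\alpha(n)<\alpha(n+1)$ by strict increase of $\alpha$---so there is no genuine obstacle, and no need for the more elaborate packaging you sketch earlier in that paragraph.
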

\begin{proof}
For each $q\in \mathbb{Q}$ consider its map $\hat{q}$ in oriented
reals. Items (1) and (2) are straight forward.

\noindent (3). If $\alpha< \hat{q}$ then by
definition~\ref{o-cut}, there exists $m$ such that for all $n$,
$\alpha(n)<q-\frac{1}{m+1}$. Let $p=q-\frac{1}{m+1}$. By (2), we
have $\alpha\leq p$. The converse is straight forward.

\noindent (4). If $\hat{q}\leq \alpha$ then for all $m$ there
exists $n_m$ such that $q-\frac{1}{m+1}<\alpha(n_m)$. For $p<q$,
there exists $m_0$ such that
$p<q-\frac{1}{m_0+1}<\alpha(n_{m_0})$. By (1), $p<\alpha$. For the
converse assume $\forall p\in \mathbb{Q} (p<q\rightarrow
p<\alpha)$. Then for all $m$, $q-\frac{1}{m+1}<\alpha$. By (1),
there exists $n_m$ such that $q-\frac{1}{m+1}<\alpha(n_m)$.
\end{proof}

For each $\alpha\in\mathbb{R}^o$, let $A_\alpha=\{q\in
\mathbb{Q}\mid \exists n (q<\alpha(n))\}$ be the \emph{cut}
specified by $\alpha$. Note that $\alpha=^o\beta$ if and only if
$A_\alpha=A_\beta$ as sets.

\begin{proposition} For $\alpha\in \mathbb{R}^o$,
\begin{enumerate}
\item $ \forall q \in\mathbb{Q}~( q\in A_\alpha\ra \exists
p\in \mathbb{Q}~ (p>q \wedge p\in A_\alpha))$
~$(\emph{openness})$,
\item $ \forall p,q\in\mathbb{Q}~(p<q\wedge
q\in A_\alpha\ra p\in A_\alpha)$ ~$(\emph{monotonicity})$,
\item $\exists p,q\in \mathbb{Q}~(p\in A_\alpha\wedge q\not\in
A_\alpha)$~$(\emph{boundedness}).$
\end{enumerate}
\end{proposition}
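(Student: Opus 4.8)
The plan is to unwind the definition $A_\alpha = \{q\in\mathbb{Q}\mid \exists n\,(q<\alpha(n))\}$ in each of the three clauses, using the two defining features of an element $\alpha\in\mathbb{R}^o$: that the sequence is \emph{strictly increasing} and that it is \emph{bounded}. All three parts are then direct transcriptions of the definition together with elementary facts about the (decidable) order on $\mathbb{Q}$.

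For \emph{openness}, I would start from $q\in A_\alpha$, which gives a witness $n$ with $q<\alpha(n)$. Strict increase yields $\alpha(n)<\alpha(n+1)$, so I set $p:=\alpha(n)$: then $p>q$, and $p<\alpha(n+1)$ witnesses $p\in A_\alpha$. That is exactly the required $p$. For \emph{monotonicity}, assume $p<q$ and $q\in A_\alpha$; pick $n$ with $q<\alpha(n)$, and transitivity of $<$ on $\mathbb{Q}$ gives $p<\alpha(n)$, hence $p\in A_\alpha$ immediately.

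For \emph{boundedness} I would exhibit the two rationals explicitly. For a member of $A_\alpha$, strict increase gives $\alpha(0)<\alpha(1)$, so $\alpha(0)\in A_\alpha$. For a non-member, invoke boundedness: fix $M$ with $\alpha(n)<M$ for all $n$. Then for each $n$, since the order on $\mathbb{Q}$ is decidable and irreflexive, $\alpha(n)<M$ refutes $M<\alpha(n)$; hence $\neg\exists n\,(M<\alpha(n))$, i.e.\ $M\notin A_\alpha$.

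The only spot that calls for a little care — and the closest thing to an obstacle — is this last step: constructively one must not route through a double negation, but rather use that $<$ on $\mathbb{Q}$ is decidable, so that $\alpha(n)<M$ genuinely disproves $M<\alpha(n)$. With that observation in hand, nothing further is needed.
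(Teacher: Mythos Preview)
Your proposal is correct and matches the paper's approach: the paper simply records the proof as ``straightforward,'' and what you have written is precisely the intended unwinding of the definition of $A_\alpha$ together with strict increase and boundedness of $\alpha$. One small remark: in the last step you do not actually need decidability of the order on $\mathbb{Q}$; asymmetry alone already gives $\alpha(n)<M\Rightarrow\neg(M<\alpha(n))$ constructively, so the argument is even a touch simpler than you suggest.
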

\begin{proof} It is straightforward.
\end{proof}

\begin{lemma}
For all $\alpha,\beta\in\mathbb{R}^o$, there exists $\gamma\in
\mathbb{R}^o$, which specifies  the cut $A_\alpha\cap A_\beta$.
\end{lemma}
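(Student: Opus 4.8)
The goal is, given $\alpha,\beta\in\mathbb{R}^o$, to construct $\gamma\in\mathbb{R}^o$ with $A_\gamma = A_\alpha\cap A_\beta$. My plan is to build $\gamma$ by a simultaneous "merge" of the two sequences, interleaving their values while keeping the sequence strictly increasing and bounded; the natural candidate is a running maximum read off alternately from $\alpha$ and $\beta$. Concretely, I would first note that $q\in A_\alpha\cap A_\beta$ iff $\exists n(q<\alpha(n))$ and $\exists m(q<\beta(m))$, which (since both are increasing) is equivalent to $\exists k(q<\min(\alpha(k),\beta(k)))$. So the set $A_\alpha\cap A_\beta$ is exactly the cut specified by the sequence $k\mapsto \min(\alpha(k),\beta(k))$. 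The only defect is that $\min(\alpha(k),\beta(k))$ need not be \emph{strictly} increasing — it can stall when one coordinate overtakes the other.

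To repair strictness, I would pass to a strictly increasing subsequence. Since both $\alpha$ and $\beta$ are strictly increasing, for each $k$ there is some $k' > k$ with $\min(\alpha(k'),\beta(k')) > \min(\alpha(k),\beta(k))$: indeed, pick $k'$ large enough that both $\alpha(k') > \min(\alpha(k),\beta(k))$ and $\beta(k') > \min(\alpha(k),\beta(k))$, using Proposition~\ref{inequ}(1) applied to each of $\alpha,\beta$ — this search is constructive since strict increase of a rational sequence is decidable step by step. Iterating, I obtain a strictly increasing function $k\mapsto \ell(k)$ and set $\gamma(k) = \min(\alpha(\ell(k)),\beta(\ell(k)))$. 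Then $\gamma$ is strictly increasing by construction, and bounded above by $\min(M_\alpha,M_\beta)$ where $M_\alpha,M_\beta$ are bounds for $\alpha,\beta$; hence $\gamma\in\mathbb{R}^o$.

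Finally I would verify $A_\gamma = A_\alpha\cap A_\beta$. For $\subseteq$: if $q<\gamma(k) = \min(\alpha(\ell(k)),\beta(\ell(k)))$ then $q<\alpha(\ell(k))$ and $q<\beta(\ell(k))$, so $q\in A_\alpha$ and $q\in A_\beta$. For $\supseteq$: if $q\in A_\alpha\cap A_\beta$, choose $n,m$ with $q<\alpha(n)$, $q<\beta(m)$; taking $j = \max(n,m)$ and using monotonicity of $\alpha,\beta$ gives $q < \min(\alpha(j),\beta(j))$, and since $\ell$ is strictly increasing there is $k$ with $\ell(k)\ge j$, whence $q<\min(\alpha(\ell(k)),\beta(\ell(k)))=\gamma(k)$, i.e. $q\in A_\gamma$.

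I expect the main subtlety to be the constructive legitimacy of the subsequence extraction: one must check that the "next index" search terminates constructively, which it does because we only ever compare finitely many explicitly given rationals and the bound is witnessed, so no appeal to Markov's principle or choice beyond countable dependent choice on a decidable predicate is needed. The rest is routine unwinding of Definition~\ref{o-cut} and Proposition~\ref{inequ}.
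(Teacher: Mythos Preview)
Your argument is correct, but the subsequence extraction is unnecessary: contrary to your claim, the sequence $k\mapsto\min(\alpha(k),\beta(k))$ cannot stall and is already strictly increasing. Since $\min(\alpha(k),\beta(k))\le\alpha(k)<\alpha(k+1)$ and $\min(\alpha(k),\beta(k))\le\beta(k)<\beta(k+1)$, we get $\min(\alpha(k),\beta(k))<\min(\alpha(k+1),\beta(k+1))$; this is constructively unproblematic because the order on $\mathbb{Q}$ is decidable. The paper simply declares the proof ``easy'' and, later in the proof of Proposition~\ref{CtoN}, tacitly uses $\gamma(n)=\min(\alpha(n),\beta(n))$ as an element of $\mathbb{R}^o$, confirming that this direct definition is the intended one. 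Your verification that $A_\gamma=A_\alpha\cap A_\beta$ is correct and applies verbatim to the simpler $\gamma$, so nothing is lost by dropping the index function $\ell$ altogether.
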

\begin{proof}
The proof is easy.
\end{proof}

\begin{lemma}\label{bseq} Let $\gamma: \mathbb{N}\ra
\mathbb{Q}$ be an upper bounded sequence, i.e., $\exists M\in
\Q\forall n(\gamma(n)<M)$, then there exists $\alpha\in
\mathbb{R}^o$ which specifies $A=\{q\in \Q\mid \exists n\in
\mathbb{N}(q<\gamma(n))\}$.
\end{lemma}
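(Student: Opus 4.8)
The plan is to build $\alpha$ from the running maxima of $\gamma$, shifted downward by a vanishing amount to force strict monotonicity. First I would set $m_n=\max(\gamma(0),\gamma(1),\dots,\gamma(n))$ for each $n$. Since the order on $\mathbb{Q}$ is decidable, the maximum of two rationals is computable and equals one of its arguments; hence by induction on $n$ the rational $m_n$ is well defined, $m_n\in\{\gamma(0),\dots,\gamma(n)\}$, and the sequence $(m_n)_n$ is non-decreasing. Using the bound $M$ from the hypothesis we get $m_n<M$ for every $n$.

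Next I would define $\alpha(n)=m_n-\tfrac{1}{n+1}$ and check that $\alpha\in\mathbb{R}^o$. It is bounded above, since $\alpha(n)<m_n<M$. It is strictly increasing, since $\alpha(n+1)-\alpha(n)=(m_{n+1}-m_n)+\bigl(\tfrac{1}{n+1}-\tfrac{1}{n+2}\bigr)>0$, the first summand being $\geq 0$ and the second being $>0$.

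It then remains to verify that $\alpha$ specifies $A$, i.e. $A_\alpha=A$. For $A_\alpha\subseteq A$: if $q<\alpha(n)$ then $q<m_n-\tfrac{1}{n+1}<m_n$, and since $m_n=\gamma(j)$ for some $j\leq n$ we obtain $q<\gamma(j)$, so $q\in A$. For $A\subseteq A_\alpha$: if $q<\gamma(n)$ then $q<\gamma(n)\leq m_n$, so $m_n-q$ is a positive rational; by the Archimedean property of $\mathbb{Q}$ pick $k\geq n$ with $\tfrac{1}{k+1}<m_n-q$. Since $m_k\geq m_n$ we get $\alpha(k)=m_k-\tfrac{1}{k+1}\geq m_n-\tfrac{1}{k+1}>q$, hence $q\in A_\alpha$. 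This gives $A_\alpha=A$, which is exactly the assertion.

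As for difficulty, the argument is essentially routine; the only points needing care are the constructive ones — that the finite maximum $m_n$ is genuinely computable and attained (which rests on decidability of the order on $\mathbb{Q}$) and the appeal to the Archimedean property to pass from $q<m_n$ to $q<\alpha(k)$ for a suitable $k$. There is no real obstacle here: the shift by $\tfrac{1}{n+1}$ is precisely what reconciles the requirement of strict increase with the possibility that consecutive running maxima coincide.
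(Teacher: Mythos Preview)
Your construction is exactly the one the paper uses: define $\alpha(n)=\max\{\gamma(0),\dots,\gamma(n)\}-\tfrac{1}{n+1}$ and note that this is strictly increasing, bounded, and specifies $A$. Your write-up is simply a more detailed verification of what the paper states in one line, with the constructive fine points (decidability of the order on $\mathbb{Q}$, the Archimedean step) made explicit; there is no substantive difference in approach.
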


\begin{proof} Let
$\alpha(n)=max\{\gamma(0),\gamma(1),...,\gamma(n)\}-\frac{1}{n+1}$.
The sequence $\alpha$ is strictly increasing and the set $A$ would
be equal to $\{q\in \Q\mid \exists n\in
\mathbb{N}(q<\alpha(n))\}$.
\end{proof}

The main difference between the collection of  oriented reals
$\mathbb{R}^o$ and other collections of  Dedekind reals such as
$\mathbb{R}^d$, extended reals $\mathbb{R}^{be}$ and  classical
reals $\mathbb{R}^e$ is that the former satisfies the
\emph{monotonicity} property ($\forall \alpha\in\mathbb{R}^o
\forall p,q\in \Q(p<q\wedge q\in A_\alpha\ra p\in A_\alpha)$),
whereas the others satisfy \emph{strong monotonicity} (for each
Dedekind cut $A$, $\forall p,q\in \Q(p<q\wedge \neg\neg q\in A\ra
p\in A)$). It is known that
$\mathbb{R}^d\subset\mathbb{R}^{be}\subset\mathbb{R}^e$
(\cite{kn:TD}, page 270). In the following proposition, by using
the Markov Principle (\cite{kn:TD}, page 204):
\begin{center}
$\MP\ \ \ \ \ \forall\beta\in\mathbb{N}^\mathbb{N}(\neg\neg\exists
k \beta(k)=0\ra\exists k \beta(k)=0)$,
\end{center}
we show that $\mathbb{R}^o\subseteq\mathbb{R}^{be}$.

\begin{proposition} {\footnotesize{$(\MP)$}} For $\alpha\in
\mathbb{R}^o$, the cut $A_\alpha$
 satisfies \emph{strong monotonicity}.
\end{proposition}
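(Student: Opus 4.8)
The plan is to reduce strong monotonicity to ordinary monotonicity (item (2) of the preceding proposition) together with a single application of $\MP$. Fix rationals $p<q$ and assume $\neg\neg(q\in A_\alpha)$. By the definition of the cut, $q\in A_\alpha$ is the $\Sigma^0_1$ statement $\exists n\,(q<\alpha(n))$, and since $q$ and each $\alpha(n)$ are rationals, the matrix $q<\alpha(n)$ is decidable in $n$. So I would define $\beta\in\mathbb{N}^\mathbb{N}$ by setting $\beta(n)=0$ if $q<\alpha(n)$ and $\beta(n)=1$ otherwise; then $\exists k\,(\beta(k)=0)$ is equivalent to $q\in A_\alpha$, and the hypothesis $\neg\neg(q\in A_\alpha)$ becomes $\neg\neg\exists k\,(\beta(k)=0)$.

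Now $\MP$ applies verbatim to $\beta$ and yields $\exists k\,(\beta(k)=0)$, i.e. $\exists n\,(q<\alpha(n))$, which is precisely $q\in A_\alpha$. Finally, since $p<q$ and $q\in A_\alpha$, the monotonicity property of $A_\alpha$ gives $p\in A_\alpha$. As $p,q$ were arbitrary, $A_\alpha$ satisfies strong monotonicity, and (combined with the already established openness, monotonicity and boundedness) this gives $\mathbb{R}^o\subseteq\mathbb{R}^{be}$.

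Since the argument is this short, there is no genuine obstacle; the only point requiring care is that $q<\alpha(n)$ is decidable in $n$, so that $\beta$ is a legitimate element of $\mathbb{N}^\mathbb{N}$ and $\MP$ is applicable — and this holds because the order relation on $\mathbb{Q}$ is decidable. It is worth remarking that this step is exactly where the contrast with $\mathbb{R}^d$ surfaces: the inference from $\neg\neg(q\in A_\alpha)$ to $q\in A_\alpha$ is non-constructive and fails without $\MP$, which is consistent with the paper's thesis that oriented reals cannot be positively approximated by rationals.
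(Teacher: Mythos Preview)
Your proof is correct and follows essentially the same route as the paper's: both define $\beta(k)=0$ iff $q<\alpha(k)$ (using decidability of $<$ on $\mathbb{Q}$), apply $\MP$ to pass from $\neg\neg\exists k(\beta(k)=0)$ to $\exists k(\beta(k)=0)$, i.e.\ from $\neg\neg(q\in A_\alpha)$ to $q\in A_\alpha$, and then invoke ordinary monotonicity to conclude $p\in A_\alpha$. The only difference is presentational (the paper phrases it as an equivalence $\neg\neg q\in A_\alpha\leftrightarrow q\in A_\alpha$), and your closing commentary is extraneous to the argument.
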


\begin{proof} We  show that for every $\alpha\in
\mathbb{R}^o$, $A_\alpha$ satisfies \emph{strong monotonicity},
i.e., $\forall p,q\in \Q(p<q\wedge \neg\neg q\in A_\alpha\ra p\in
A_\alpha)$. Since $q\in A_\alpha\leftrightarrow\exists k\in
\mathbb{N} (q<\alpha(k))$, we have $\neg\neg q\in
A_\alpha\leftrightarrow\neg\neg\exists k\in \mathbb{N}
(q<\alpha(k))$. By \MP, $\neg\neg\exists k\in \mathbb{N}
(q<\alpha(k))\leftrightarrow \exists k\in \mathbb{N}
(q<\alpha(k))$, (assume
$\beta(k)=\{^{0~~~q<\alpha(k)}_{1~~~otherwise}$). So $\neg\neg
q\in A_\alpha\leftrightarrow q\in A_\alpha$, for any arbitrary
$q\in \Q$. Then $p<q\wedge \neg\neg q\in A_\alpha\leftrightarrow
p<q\wedge q\in A_\alpha$, and since  $A_\alpha$ satisfies
\emph{monotonicity}, we derive $p \in A_\alpha$.
\end{proof}

 For choice sequences, if $\Phi$ is  a total function    from the
collection of choice sequences to natural numbers, the value of
$\Phi$ for a sequence $\alpha$ just depends on a finite segment
of $\alpha$. Now this question seems natural in constructive
mathematics:

\begin{center}
\emph{how can we \textbf{construct} a \textbf{total} mapping
$\Phi: \mathbb{R}^o\ra \mathbb{N}$?}
\end{center}

 Assume we have a strategy to construct a total mapping
$\Phi:\mathbb{R}^o\ra\mathbb{N}$. Then for any arbitrary sequence
$\alpha\in \mathbb{R}^o$, we must be able to construct a witness
for $\alpha$. Since $\Phi$ is well-defined, for any sequence
$\beta=^o\alpha$, we will have $\Phi(\beta)=\Phi(\alpha)$. Therefore, our
strategy cannot depend on any finite segment of $\alpha$. Because
of this obstacle, one may expect  that it is not possible construct
$\Phi$ unless $\Phi$ happens to be constant. We fulfill this
intention, using Brouwer's weak continuity principle
(\cite{kn:TD},~page~209):

%%%%%%%%%%%%%%%%%%55

\begin{center}
$\WCN\ \ \ \ \forall\alpha\in \T\exists y
(\Phi(\alpha)=y)\ra\forall\alpha\in\T\exists x \forall \beta\in
\T (\bar{\beta} x=\bar{\alpha} x\ra \Phi\beta=\Phi\alpha)$,
\end{center}
where $\T$ is a spread and for each sequence $\alpha$,
$\bar{\alpha}x=\langle
\alpha(0),\alpha(1),...,\alpha(x-1)\rangle$.

\begin{proposition}\label{CtoN} $(\WCN)$ Any (constructive)
total function $\Phi:\mathbb{R}^o\ra\mathbb{N}$ is constant.
\end{proposition}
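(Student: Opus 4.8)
The plan is to apply $\WCN$ to the continuous extension of $\Phi$ along the map $q\mapsto\hat q$, but the real work is to exploit the fact that $=^o$ collapses any finite initial segment of an oriented real, so that $\WCN$ forces $\Phi$ to be locally constant in a sense that propagates everywhere. First I would observe that the set $\mathbb{R}^o$ of strictly increasing bounded rational sequences is (essentially) a subspread of $\mathbb{Q}^{\mathbb{N}}$: membership is decided by the spread law ``$\alpha(n)<\alpha(n+1)$ and $\alpha(n)<M$ for the bound $M$ fixed at the first node'' — or, to avoid the bound issue, work inside the spread $\T$ of all strictly increasing rational sequences bounded by a fixed rational $M$, which is a legitimate spread, and note that every $\alpha\in\mathbb{R}^o$ is $=^o$-equal to some element of such a $\T$ (rescale). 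So $\Phi$ restricted to $\T$ is a total function on a spread, and $\WCN$ applies.

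Second, fix an arbitrary $\alpha\in\T$. By $\WCN$ there is $x$ such that every $\beta\in\T$ with $\bar\beta x=\bar\alpha x$ satisfies $\Phi\beta=\Phi\alpha$. The key step is now: given \emph{any} $\delta\in\T$, I want to connect $\delta$ to $\alpha$ through a chain of oriented reals each $=^o$-equal to the previous one, so that $\Phi$ is constant along the chain and hence $\Phi\delta=\Phi\alpha$. Concretely, from $\delta$ build $\delta'\in\T$ by deleting a long enough initial block and re-indexing (or by prepending a steep run of values below $\delta(0)$): since only finitely many initial terms change and the tail is cofinal with the original tail, $A_{\delta'}=A_\delta$, i.e. $\delta'=^o\delta$, so $\Phi\delta'=\Phi\delta$. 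The freedom in choosing that initial block is exactly the freedom to make $\bar{\delta'}x$ agree with a prescribed pattern. So I can arrange a sequence $\delta=\delta_0=^o\delta_1=^o\cdots=^o\delta_k$ with $\bar{\delta_k}x=\bar\alpha x$: each step adjusts the first $x$ coordinates while preserving the cut. Then $\Phi\delta=\Phi\delta_k=\Phi\alpha$. Since $\delta$ was arbitrary, $\Phi$ is constant on $\T$, and since every oriented real is $=^o$-equal to an element of $\T$, $\Phi$ is constant on all of $\mathbb{R}^o$.

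The main obstacle is the bookkeeping in that chain construction: I must check that, for a given initial segment $s=\langle s_0,\dots,s_{x-1}\rangle$ that is strictly increasing and stays below the bound $M$, and a given $\delta\in\T$, there really is $\delta'\in\T$ with $\bar{\delta'}x=s$ and $A_{\delta'}=A_{\delta}$. The natural recipe is $\delta'(n)=s_n$ for $n<x$ and $\delta'(n)=\max\{\,s_{x-1}+\tfrac1{n+1}\cdot(\text{something}),\ \delta(m_n)\,\}$ for suitable $m_n\to\infty$, chosen so that $\delta'$ is strictly increasing, bounded by $M$, and cofinally interleaved with $\delta$'s tail; Lemma~\ref{bseq} is exactly the tool that legitimizes producing such a $\delta'\in\mathbb{R}^o$ from a bounded sequence, after which one rescales back into $\T$. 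One must also be mildly careful that $\WCN$ is stated for a fixed spread $\T$ whereas $=^o$-equality lets us roam outside any one $\T$; this is handled by the rescaling remark, or by noting the argument only ever needs finitely many adjustments within one sufficiently generous $\T$. Once these routine verifications are in place, constancy of $\Phi$ follows immediately. $\dashv$
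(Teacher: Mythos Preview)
Your chain argument has a real gap. Since $=^o$ is transitive, the chain $\delta=\delta_0=^o\cdots=^o\delta_k$ collapses to a single step: you are really claiming that for \emph{every} $\delta\in\T$ there exists $\delta'\in\T$ with $\delta'=^o\delta$ and $\bar{\delta'}x=\bar\alpha x$. But $\delta'=^o\delta$ means $A_{\delta'}=A_\delta$, while $\bar{\delta'}x=\bar\alpha x$ together with strict increase forces $\delta'(x)>\alpha(x-1)$, hence $\alpha(x-1)\in A_{\delta'}=A_\delta$. If $\delta$ lies entirely below $\alpha(x-1)$ (say $\forall n\,\delta(n)<\alpha(x-1)$), no such $\delta'$ exists. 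Your proposed recipe with a $\max$ would indeed produce a strictly increasing bounded sequence through $\bar\alpha x$, but its cut strictly contains $A_\delta$, so it is not $=^o$-equal to $\delta$. Prepending values below $\delta(0)$ does not help either, because the prescribed initial block $\bar\alpha x$ need not lie below $\delta(0)$.

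What your argument \emph{does} establish is the one-sided statement: if $\alpha\le\beta$ (so $\alpha(x-1)\in A_\alpha\subseteq A_\beta$), then one can find $\gamma=^o\beta$ passing through $\bar\alpha x$, and hence $\Phi(\alpha)=\Phi(\beta)$. This is exactly the first half of the paper's proof, carried out inside the spread $\mathbb{R}^o_M=\{\gamma\in\mathbb{R}^o\mid\gamma<M\}$ for a bound $M$ of $\beta$. The missing second half is a device to compare two oriented reals that are not $\le$-comparable: set $\gamma(n)=\min(\alpha(n),\beta(n))$. Then $\gamma\in\mathbb{R}^o$ is strictly increasing and bounded, with $\gamma\le\alpha$ and $\gamma\le\beta$; applying the one-sided result twice gives $\Phi(\gamma)=\Phi(\alpha)$ and $\Phi(\gamma)=\Phi(\beta)$, whence $\Phi(\alpha)=\Phi(\beta)$. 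With this $\min$-trick added, your outline becomes the paper's proof.
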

\begin{proof} Let $\alpha,\beta\in \mathbb{R}^o$ be arbitrary,
$\alpha\leq\beta$, $M\in\Q$ be an upper-bound for $\beta$. The set
$\mathbb{R}^o_M=\{\gamma\in\mathbb{R}^o\mid \gamma<M\}$ is a
spread. Since $\Phi$ is a total, by $\WCN,$ there exists $t$ such
that, for all $\gamma\in\mathbb{R}^o_M$, if $\bar{\gamma}
t=\bar{\alpha}t$ then $\Phi(\gamma)=\Phi(\alpha)$. Find $\gamma\in
\mathbb{R}^o_M$ passing through $\bar{\alpha}t$ such that
$\gamma=^o\beta$. By well-definedness of $\Phi$ we conclude
$\Phi(\beta)=\Phi(\alpha)$. So, for all $\alpha,\beta\in
\mathbb{R}^o$, if $\alpha\leq\beta$ then
$\Phi(\beta)=\Phi(\alpha)$. Now, for arbitrary $\alpha,\beta\in
\mathbb{R}^o$, define $\gamma(n)=min(\alpha(n),\beta(n))$. Here,
$\gamma\leq\alpha$ and $\gamma\leq\beta$. Then
$\Phi(\gamma)=\Phi(\alpha)$ and $\Phi(\gamma)=\Phi(\beta)$.
Consequently $\Phi(\beta)=\Phi(\alpha)$.
\end{proof}

 We showed that any total function from $\mathbb{R}^o $ to natural
numbers is constant in the presence of the Weak Continuity
Principle, that is., if one has a constructive method that for each
oriented cut is able to introduce a witness, a natural number,
then the witness is unique. To avoid this, as a replacement for
natural numbers, we consider another category of objects called
{\em almost natural numbers}, and construct witnesses for oriented
reals from this category.

\begin{definition} We let $\mathbb{N}^\ast$ be the set of all functions $\xi$ from $\mathbb{N}$
to $\mathbb{N}$ such that, for some $k$, for all $n$, $\xi(n)
\leq \xi(n + 1) \leq k$. For all $\xi, \nu$ in $\mathbb{N}^\ast$
we define:

\begin{itemize}\item[]
$\xi \leq \nu$ if and only if $\forall m \exists n (\xi(m) \leq
\nu(n))$, and

\item[] $\xi =^\ast \nu$ if and only if $(\xi \leq \nu)\wedge (\nu
\leq \xi)$.
\end{itemize}  We call $\mathbb{N}^\ast$,
regarding equality $=^\ast$, the set of \emph{almost natural
numbers}.
\end{definition}

It is clear that $\mathbb{N}^\ast$ is classically isomorphic to $
\mathbb{N}$ as a set. In fact, classically, elements in
$\mathbb{N}^\ast$ are increasing sequences  that converge.

\begin{lemma}\label{converge}
For every $\xi\in \mathbb{N}^\ast$,
\begin{enumerate}
\item $\neg\neg\exists n\forall
m>n[\xi(m)=\xi(n)]$,
\item $\neg\forall n \exists m>n~ (\xi(m)\neq\xi(m+1))$.
\end{enumerate}
\end{lemma}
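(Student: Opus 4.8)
The plan is to exploit that every $\xi\in\mathbb{N}^\ast$ is a \emph{non-decreasing} sequence of naturals (monotonicity of the whole sequence follows by an obvious induction from $\xi(n)\le\xi(n+1)$) which is bounded: $\exists k\,\forall n\,(\xi(n)\le k)$. Both items are constructively weakened forms of the classical fact that such a sequence is eventually constant, so it is natural to prove (1) first and then read off (2) with little extra work. Throughout I write $P(n)$ for the ($\Pi^0_1$) statement $\forall m>n\,[\xi(m)=\xi(n)]$, so that (1) is exactly $\neg\neg\exists n\,P(n)$.

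For (1), for each $j\in\{0,1,\dots,k+1\}$ let $s(j)$ abbreviate $\exists n\,(\xi(n)\ge j)$. Two facts are immediate: $s(0)$ holds (take $n=0$), and $\neg s(k+1)$ holds, since $\xi(n)\le k$ for every $n$. The heart of the argument is the implication $s(j)\rightarrow\neg\neg s(j+1)$, which I will derive under the assumption $\neg\exists n\,P(n)$, i.e.\ $\forall n\,\neg P(n)$: given a witness $n_0$ with $\xi(n_0)=j'\ge j$, if we had $\neg\exists m>n_0\,(\xi(m)\ge j'+1)$ then, $\ge$ on $\mathbb{N}$ being decidable, $\xi(m)\le j'$ for all $m>n_0$; combined with monotonicity this forces $\xi(m)=j'=\xi(n_0)$ for all $m>n_0$, i.e.\ $P(n_0)$, contradicting $\forall n\,\neg P(n)$. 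Hence $\neg\neg\exists m>n_0\,(\xi(m)\ge j'+1)$, and since $j'\ge j$ this yields $\neg\neg s(j+1)$. Now assume $\neg\exists n\,P(n)$: then $\neg\neg s(0)$ together with the standard consequence $\neg\neg s(j)\rightarrow\neg\neg s(j+1)$ of $s(j)\rightarrow\neg\neg s(j+1)$ gives, after the finitely many steps $j=0,\dots,k$, that $\neg\neg s(k+1)$, contradicting $\neg s(k+1)$. This contradiction establishes $\neg\neg\exists n\,P(n)$, which is (1). (A variant I might use instead is to prove $\neg\neg\exists j\le k\,(s(j)\wedge\neg s(j+1))$ outright by the same $\neg\neg$-propagation, and then observe that any such $j$ supplies an $n_0$ with $\xi(n_0)=j$ equal to the maximal value of $\xi$, whence $P(n_0)$.)

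For (2), suppose $\forall n\,\exists m>n\,(\xi(m)\ne\xi(m+1))$; I would show this contradicts (1). Fix $n$ and assume $P(n)$; applying the hypothesis to this $n$ yields some $m_0>n$ with $\xi(m_0)\ne\xi(m_0+1)$, but $m_0>n$ and $m_0+1>n$ give $\xi(m_0)=\xi(n)=\xi(m_0+1)$ by $P(n)$, a contradiction. Hence $\neg P(n)$ for every $n$, i.e.\ $\neg\exists n\,P(n)$, against (1). (Alternatively one can argue directly: using monotonicity and decidability, $\xi(m)\ne\xi(m+1)$ becomes $\xi(m)<\xi(m+1)$, so the hypothesis would let us pick indices $m_0<m_1<\dots<m_k$ at which $\xi$ strictly increases, forcing $\xi(m_k+1)\ge k+1$ and contradicting the bound.)

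The main obstacle is exactly where constructivity bites: one cannot decide whether $\xi$ ever attains a given height $j$, nor whether a strict increase occurs beyond a given index, which is why only the double-negated form (1) and the negative form (2) are available. The device that circumvents this is the finite induction of length $k+2$ that transports a double negation from the trivially true $s(0)$ up to the provably false $s(k+1)$; getting that propagation step right — and noticing that it needs only $\forall n\,\neg P(n)$, not any decision about the $s(j)$ — is the crux. The remaining manipulations (turning $\neg(\xi(m)\ge j+1)$ into $\xi(m)\le j$, and $\neg(\xi(m)\ne\xi(m+1))$ into $\xi(m)=\xi(m+1)$) are routine uses of the decidability of order and equality on $\mathbb{N}$.
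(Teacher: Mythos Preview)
Your argument is correct. For part~(1) you and the paper run essentially the same finite induction, but in opposite directions: the paper works \emph{downward} on an upper bound, showing (under $\forall n\,\neg P(n)$) that $\forall n\,(\xi(n)\le k)$ implies $\forall n\,(\xi(n)<k)$, and iterates from $k_0$ down to $0$; you work \emph{upward} on attained values via $s(j)\to\neg\neg s(j{+}1)$ and propagate the double negation from $s(0)$ to $s(k{+}1)$. The paper's direction is marginally cleaner since each step is a plain implication with no $\neg\neg$-propagation, but the content is the same. For part~(2) the paper argues directly, building a strictly increasing subsequence $\xi(1)<\xi(f(1))<\xi(f(f(1)))<\cdots$ from the assumed infinitude of jump points and contradicting the bound; your primary route instead reduces (2) to (1) by showing the hypothesis forces $\forall n\,\neg P(n)$, which is a pleasant economy since it exhibits (2) as a logical consequence of (1). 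Your offered alternative for (2) is exactly the paper's direct argument.
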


\begin{proof}
\begin{enumerate}
\item To show $\forall\xi\in \mathbb{N}^\ast\neg\neg\exists
n\forall m>n[\xi(m)=\xi(n)]$, it is enough to show
$\neg\exists\xi\in \mathbb{N}^\ast\neg\exists n\forall
m>n[\xi(m)=\xi(n)]$, by the intuitionistic valid statement
$\neg\exists x A(x)\leftrightarrow\forall x\neg A(x)$. So assume
for some $\xi_0\in \mathbb{N}^\ast$, $\neg\exists n\forall
m>n[\xi_0(m)=\xi_0(n)]$. Then, $\forall n\neg\forall
m>n[\xi_0(m)=\xi_0(n)]$.

\noindent  Since $\xi_0\in \mathbb{N}^\ast$, there exists
$k_0\in\mathbb{N}$ such that $\xi_0(n)\leq\xi_0(n+1)\leq k_0$, for
all $n\in\mathbb{N}$.

\noindent We prove that for every $k\in \mathbb{N}$,
 $ (\forall
n(\xi_0(n)\leq k))\rightarrow(\forall n(\xi_0(n)< k))$~(1).

\noindent Assume there exists $t\in \mathbb{N}$ such that
$\xi_0(t)=k$. Since $\xi_0$ is nondecreasing and $\forall
n(\xi_0(n)\leq k))$, we have $\forall n>t[\xi_0(n)= k]$. It
contradicts with $\forall n\neg\forall m>n[\xi_0(m)=\xi_0(n)]$.
Hence $\forall n(\xi_0(n)< k)$.

\noindent Now let  $k=k_0$. By (1), we derive $\forall
n(\xi_0(n)\leq k_0-1)$. Repeating using (1), we have got $\forall
n(\xi_0(n)= 0)$. It contradicts with $\forall n\neg\forall
m>n[\xi_0(m)=\xi_0(n)]$.

\item Let $\xi\in \mathbb{N}^\ast$, there exists $k\in \mathbb{N}$
such that for all $n$, $\xi(n)<k$. Assume
\begin{center} $\forall
n \exists m>n~ (\xi(m)\neq\xi(m+1))$. (2)
\end{center}
Define  $f:\mathbb{N}\rightarrow \mathbb{N}$ as
\begin{center}
$f(n)=\min\{m\mid (m>n \wedge \xi(m)\neq\xi(m+1))\}+1$.
\end{center}
Due to the assumption~(2), the function $f$ is a constructive well
defined function. It is easy to see that $n<f(n)$ and
$\xi(n)<\xi(f(n))$. Therefore,\begin{center}
$\xi(1)<\xi(f(1))<\xi(f(f(1)))<\xi(f(f(f(1))))<...$.
\end{center}It contradicts with the fact that for all $n$,
$\xi(n)<k$.
\end{enumerate}
\end{proof}

  It is worth mentioning that if $\gamma$ is an almost natural
number then the set $I=\{n\in \mathbb{N}\mid \exists k~ (n\leq
\gamma(k))\}$ is not necessary finite, but it is
\emph{quasi-finite} in the sense of \cite{kn:vel}, i.e., it is a
subset of a finite set. Moreover, its complement is \emph{almost
full}~\cite{kn:vel}, meaning that  for every strictly increasing
sequence $\alpha$, one may find a natural number $n$ such that
$\alpha(n)$ belongs to the complement of $I$.

 Until now, the two notions of oriented reals and almost natural
numbers have been defined. The following proposition (accompanied by its
proof) shows that how these two notions are related to our
intuition of  orientation. In contrast with proposition
\ref{CtoN}, we have

\begin{proposition}
There exists a non-constant total function $\Phi: \mathbb{R}^o\ra
\mathbb{N}^\ast$.
\end{proposition}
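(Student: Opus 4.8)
The plan is to realise $\Phi$ as the map that records, stage by stage, whether a given oriented real has already overtaken a fixed reference moment, say $0$. Precisely, for $\alpha\in\mathbb{R}^o$ set
\[
\Phi(\alpha)(n)=
\begin{cases}
1 & \text{if } 0<\alpha(n),\\
0 & \text{otherwise.}
\end{cases}
\]
Because each $\alpha(n)$ is an honest rational, the test ``$0<\alpha(n)$'' is decidable, so $\Phi(\alpha)$ is a genuine total function $\mathbb{N}\to\mathbb{N}$ taking only the values $0$ and $1$; and since $\alpha$ is strictly increasing, $0<\alpha(n)$ forces $0<\alpha(n+1)$, so $\Phi(\alpha)$ is nondecreasing. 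Hence $\Phi(\alpha)\in\mathbb{N}^\ast$ with bound $k=1$, and $\Phi$ is evidently given by a constructive rule. By Proposition~\ref{inequ}(1), $\exists n(\Phi(\alpha)(n)=1)$ holds exactly when $0\in A_\alpha$; so $\Phi(\alpha)$ is nothing but the monotone $\{0,1\}$-approximation of the (generally undecidable) statement $0\in A_\alpha$, which fits the picture of $\alpha$ as a flow: while $\alpha$ is still ``in the future'' the guess may still flip from $0$ to $1$, and once $\alpha$ has ``sunk to the past'' it is settled.

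First I would check that $\Phi$ respects the equalities, i.e.\ that $\alpha=^o\beta$ implies $\Phi(\alpha)=^\ast\Phi(\beta)$. Assume $\alpha=^o\beta$, so $A_\alpha=A_\beta$. To get $\Phi(\alpha)\le\Phi(\beta)$, fix $m$ and decide the value of $\Phi(\alpha)(m)$: if it is $0$ then $\Phi(\alpha)(m)\le\Phi(\beta)(0)$; if it is $1$ then $0<\alpha(m)$, hence $0\in A_\alpha=A_\beta$, hence by Proposition~\ref{inequ}(1) there is an $n$ with $0<\beta(n)$, i.e.\ $\Phi(\beta)(n)=1$. Either way $\exists n(\Phi(\alpha)(m)\le\Phi(\beta)(n))$, so $\Phi(\alpha)\le\Phi(\beta)$, and by symmetry $\Phi(\beta)\le\Phi(\alpha)$; thus $\Phi(\alpha)=^\ast\Phi(\beta)$. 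The key point is that $=^\ast$ compares only eventual values, so it is irrelevant that $\alpha$ and $\beta$ may cross $0$ at different stages.

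Finally I would show $\Phi$ is not constant by separating two concrete oriented reals. Take $\hat{0}$ and $\hat{1}$. Since $\hat{0}(n)=-\frac{1}{n+1}<0$ for every $n$, $\Phi(\hat{0})$ is the constant sequence $0$; since $\hat{1}(n)=1-\frac{1}{n+1}$, we have $\Phi(\hat{1})(0)=0$ and $\Phi(\hat{1})(n)=1$ for $n\ge1$. If $\Phi(\hat{0})=^\ast\Phi(\hat{1})$ held, then in particular $\Phi(\hat{1})\le\Phi(\hat{0})$, giving some $n$ with $\Phi(\hat{1})(1)=1\le\Phi(\hat{0})(n)=0$, which is decidably false; hence $\neg\big(\Phi(\hat{0})=^\ast\Phi(\hat{1})\big)$, and $\Phi$ is the desired non-constant total function. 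I expect the only delicate point to be conceptual rather than computational: in contrast with Proposition~\ref{CtoN}, where an $\mathbb{N}$-valued witness for $\alpha$ is forced to commit and $=^o$-invariance then collapses $\Phi$ to a constant, the passage to $\mathbb{N}^\ast$ is exactly what lets the witness remain ``not yet decided''; once that target is fixed, the verifications above use nothing more than decidability of rational order and Proposition~\ref{inequ}.
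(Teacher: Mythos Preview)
Your proof is correct and is essentially the paper's own construction specialized to a single threshold: the paper fixes rationals $d_1<\cdots<d_j$ and lets $\Phi(\beta)(n)$ count how many of the $d_i$ satisfy $d_i\le\beta(n)$, which for $j=1$ and $d_1=0$ (with the cosmetic change $\le$ versus $<$) is exactly your map. The verifications of membership in $\mathbb{N}^\ast$, $=^o$-invariance, and non-constancy proceed the same way in both arguments.
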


\begin{proof} Choose $d_1< d_2< ...< d_j$ from $\Q$ for
some fixed $j\in \mathbb{N}$. For any $\beta\in \mathbb{R}^o$, we
define
\begin{itemize}
\item[] $\Phi(\beta)(0)=0$, and \item[]
$\Phi(\beta)(n)=max(\{i\mid i\leq
j\wedge(d_i\leq\beta(n))\}\cup\{0\})$ for all $n\geq 1$.
\end{itemize}
One may easily  check that the followings hold true for $\Phi$:
\begin{enumerate}
\item $\Phi(\beta)\in  \mathbb{N}^\ast$. \item $\Phi$ is
well-defined, i.e., if $\alpha=^o\beta$   then
$\Phi(\alpha)=^\ast\Phi(\beta)$.

\item $\Phi$ is non-constant. Assume two different oriented cuts
$\alpha, \beta$, such that $d_1\not\in A_\alpha$ and $d_1\in
A_\beta, d_2\not\in A_\beta$. Then $\forall n\in \mathbb{N}$
$\Phi(\alpha)(n)=0$, whereas, $\exists k\forall n>k$
$\Phi(\beta)(n)=1$.
\end{enumerate}
The function $\Phi$  can be computed by the following algorithm as
well: {\footnotesize{
\begin{itemize}
\item[1.] Put $\Phi(\beta)(0):=0$; \item[2.] put~$ t=1$; \item[3.]
For i=1 to j~ do:\begin{itemize}\item[] $\{$ \item[]while $(
\beta(t)< d_i)$ do:\begin{itemize} \item[] $\{$ \item[]define $
\Phi(\beta)(t):=i-1$; \item[]~put
$t=t+1$;\item[]$\}$\end{itemize}\item[]$\}$\end{itemize} \item[4.]
For k=t+1 to $\infty$ do: \begin{itemize}\item[] $\{$ \item[] Put
$ \Phi(\beta)(k):=j$;\item[]$\}$\end{itemize}
\end{itemize}}}
The above algorithm checks the membership status of $d_i$'s  in
$A_\beta$ respectively, in an ordered manner. As soon as, the
algorithm detects that $d_i$ is in $A_\beta$, the future value of
$\Phi(\beta)$ changes. We defined $\Phi$  using the above
algorithm, in order to evoke our sense of the orientation
explained in the  introduction; the oriented real number $\beta$
is assumed to be a moment in future. Rational numbers
$d_1<d_2<...<d_j$ are assumed as future moments which we examine
the occurrence of $\beta$ at them, as time passing. Using the
information attained by  the examination, the value of
$\Phi(\beta)\in \mathbb{N}^\ast$ is determined.
\end{proof}

  Therefore there exists a total function from $\mathbb{R}^o$ to
$\mathbb{N}^\ast$ which is not constant. \emph{How a constructive
{\em total} function form $\mathbb{R}^o$ to $\mathbb{N}^\ast$ can
be? or in other word, how can we construct a {\em total} mapping
$\Phi: \mathbb{R}^o \ra \mathbb{N}^\ast$?} We answer this
question  soon in section~\ref{tocp}.

In the following, we illustrate  that, although  the oriented reals
can not be approximated by rationals, they can be approximated by
{\em almost rational numbers}, defined below.

\begin{definition}
Let  $\mathbf{Q}$  be the set of all rational sequences $\zeta:
\mathbb{N}\ra \mathbb{Q}$  in which

\begin{itemize}
\item[1.] $\zeta$ is increasing, and

\item[2.] the image of $\zeta$ is a subset of a finite set.
\end{itemize}

For all $\zeta,\zeta'\in \mathbf{Q}$, we define:
\begin{itemize}
\item[]$\zeta\leq\zeta'$ if and only if $\forall m\exists
n(\zeta(m)\leq\zeta'(n))$, and

\item[]$\zeta =\zeta'$ if and only if
$(\zeta\leq\zeta')\wedge(\zeta'\leq\zeta)$.
\end{itemize}
  We call $\mathbf{Q}$ the set of \emph{almost
rational numbers}.
\end{definition}
Note that $\mathbf{Q}$ is classically isomorphic to $\mathbb{Q}$
as a set. The set of almost rational numbers $\mathbf{Q}$ is
embedded into $\mathbb{R}^o$, by $\zeta\mapsto\hat{\zeta}$ where
$\hat{\zeta}(n)=\zeta(n)-\frac{1}{n+1}$, for $n\in \mathbb{N}$, is
an oriented real specifying the cut $\{q\in \mathbb{Q}\mid \exists
k (q<\zeta(k))\}$.

\begin{definition} For every $\alpha \in \mathbb{R}^o$,
and $r\in \mathbb{Q}$, define $(\alpha +r)\in \mathbb{R}^o$, by
$(\alpha +r)(n):=\alpha(n)+r$, for every $n$.
\end{definition}

\begin{definition} We say that an oriented real $\alpha$ can be approximated by rationals,
if for each $n$, there exists $q\in \mathbb{Q}$ such that
$(q\leq\alpha\wedge \alpha\leq q+ 2^{-n})$.
\end{definition}

  As we explained in the introduction, since moments of  time
sink back into the past, it is not possible to  approximate them using
rationals. The following proposition shows that oriented reals
cannot be approximated by rational numbers either.

\begin{proposition} {\footnotesize{$(\neg\exists$-PEM$)$}} It is false that
every oriented real can be approximated by rationals.

\end{proposition}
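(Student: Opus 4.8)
The statement to prove is: it is false that every oriented real can be approximated by rationals, and moreover this is established without invoking any instance of the principle of excluded middle (the tag $(\neg\exists\text{-PEM})$). So the strategy is a proof by contradiction that derives from ``every $\alpha\in\mathbb{R}^o$ can be approximated by rationals'' a decision procedure for some clearly non-decidable statement -- the standard constructive move being to reduce it to an instance of $\PEM$ for an undecidable predicate, e.g.\ deciding $\exists n\, \beta(n)=0$ for an arbitrary binary sequence $\beta$, or deciding $q\in A_\alpha$ versus $q\notin A_\alpha$ for a suitably chosen $\alpha$.

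First I would unwind the definition of approximability: the hypothesis gives, for $n=1$ (or $n=2$, whatever constant is convenient), a rational $q$ with $q\leq\alpha$ and $\alpha\leq q+2^{-n}$. Using Proposition~\ref{inequ}, $q\leq\alpha$ unpacks as $\forall p\in\mathbb{Q}(p<q\to p<\alpha)$ and $\alpha\leq q+2^{-n}$ unpacks as $\forall m(\alpha(m)<q+2^{-n})$. The key point is that this pins the cut $A_\alpha$ between two explicit rational thresholds. Then I would feed in a particular family of oriented reals built from an arbitrary binary sequence $\beta$: define $\alpha_\beta$ to be (via Lemma~\ref{bseq}) a strictly increasing bounded sequence whose associated cut is $\{q\mid q<0\}$ if $\beta$ is never $0$, but ``jumps'' to include rationals up to (say) $1$ as soon as $\beta$ hits $0$ -- concretely $\gamma(k)=0$ while $\beta(0),\dots,\beta(k)$ are all nonzero, and $\gamma(k)=1$ once some $\beta(i)=0$ for $i\le k$, then apply Lemma~\ref{bseq}. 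Approximating $\alpha_\beta$ with $n$ large enough that $2^{-n}<\tfrac12$ forces the approximating rational $q$ to satisfy either $q\leq 0$ or $q>\tfrac12$; case analysis on the rational $q$ (a decidable comparison!) then decides whether $\exists i\,\beta(i)=0$, contradicting the fact that this is not decidable for a free choice sequence (or, if one wants a purely logical contradiction, contradicting $\neg\exists\text{-PEM}$, i.e.\ the statement that $\neg\forall\beta(\exists i\,\beta(i)=0 \vee \neg\exists i\,\beta(i)=0)$).

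The step I expect to be the main obstacle is making the ``jump'' construction genuinely produce an element of $\mathbb{R}^o$ and genuinely forcing the dichotomy on $q$: one has to check that $\gamma$ as defined is upper bounded (it is, by $2$) so Lemma~\ref{bseq} applies and yields $\alpha_\beta\in\mathbb{R}^o$; and one has to verify carefully that from $q\leq\alpha_\beta$ and $\alpha_\beta\leq q+2^{-n}$ one can actually read off, via Proposition~\ref{inequ}(2) and (4), enough information about the threshold to decide $\beta$ -- in particular that $q>0$ (equivalently $\tfrac12\le q$ given the dichotomy) is equivalent to $\exists i\,\beta(i)=0$, while $q\le 0$ forces $\alpha_\beta\le 2^{-n}<\tfrac12$ which rules out the jump having occurred, i.e.\ $\neg\exists i\,\beta(i)=0$. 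Since the comparison of the fixed rational $q$ with $0$ (or with $\tfrac12$) is decidable, this produces the forbidden instance of excluded middle.

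Finally I would package this: assuming for contradiction that every oriented real is approximable by rationals, the construction above assigns to each binary sequence $\beta$ a decision of $\exists i\,\beta(i)=0 \vee \neg\exists i\,\beta(i)=0$; since $\beta$ was arbitrary this yields $\forall\beta(\exists i\,\beta(i)=0\vee\neg\exists i\,\beta(i)=0)$, which is the rejected instance of $\PEM$, contradiction. Hence it is false that every oriented real can be approximated by rationals. $\dashv$
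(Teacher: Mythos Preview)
Your argument is correct, but it follows a different route from the paper's. One small misreading: the tag $(\neg\exists\text{-PEM})$ does not mean ``proved without using PEM''; it records that the principle $\neg\exists\text{-PEM}$ --- i.e.\ the negation of $\forall\beta(\exists i\,\beta(i)=0\vee\neg\exists i\,\beta(i)=0)$ --- is being \emph{assumed}. This does not affect your mathematics, since what you derive is exactly $\forall\beta(\exists i\,\beta(i)=0\vee\neg\exists i\,\beta(i)=0)$, and that is precisely what contradicts $\neg\exists\text{-PEM}$.

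As for the comparison: the paper does not build a jump sequence. Instead it observes that an arbitrary bounded increasing rational sequence $\alpha$ yields an oriented real $\beta_\alpha(n)=\alpha(n)-\tfrac{1}{n+1}$, and that if $\beta_\alpha$ were approximable by rationals then $\beta_\alpha$ (hence $\alpha$) would be Cauchy; so approximability for all oriented reals would force every bounded monotone sequence of rationals to converge in the Brouwerian reals, a statement known (Troelstra--van~Dalen, p.~268) to contradict $\neg\exists\text{-PEM}$. Your proof is a direct, self-contained Brouwerian counterexample: you manufacture a specific $\alpha_\beta$ from a binary sequence $\beta$ and read off the forbidden decision from the approximating rational. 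The paper's version is shorter because it outsources the final step to a citation; yours is longer but exhibits the reduction to LPO explicitly and avoids the somewhat compressed passage from ``approximable'' to ``Cauchy'' in the paper's argument. Both are legitimate; yours would stand on its own without the external reference.
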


\begin{proof}  For an arbitrary  bounded increasing sequence $\alpha$ of
rationals, let $\beta_\alpha$ be an oriented real defined by
$\beta_\alpha(n)=\alpha(n)-\frac{1}{n+1}$, for $n\in \mathbb{N}$,
which specifies the cut $A=\{q\in \Q\mid \exists
k(q<\alpha(k))\}$. If $\beta_\alpha$ can be approximated by
rationals, then $\beta_\alpha$ is a Cauchy sequence. Consequently,
the sequence $\alpha$ is also Cauchy and so it converges. Since
$\alpha$ was arbitrary, it would imply that every bounded monotone
sequence has a limit in Brouwerian real line. That contradicts
with $\neg\exists$-PEM~(\cite{kn:TD}, page~268).
\end{proof}

\begin{definition} We say that an oriented real $\alpha$ can be approximated
by almost rationals, if $\forall n \exists\zeta\in
\mathbf{Q}(\hat{\zeta}  \leq \alpha\wedge \alpha\leq (\hat{\zeta}
+2^{-n}))$.
\end{definition}

\begin{proposition}~\label{app}
Every oriented real can be approximated by almost rationals
numbers.
\end{proposition}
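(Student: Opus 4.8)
The plan is to produce, for a fixed $\alpha\in\mathbb{R}^o$ and a fixed $n$, an explicit almost rational obtained by rounding $\alpha$ down to the dyadic grid of mesh $2^{-n}$. First I would choose a rational $M$ with $\alpha(k)<M$ for all $k$ (possible since $\alpha$ is bounded), and set
\[
\zeta(k)=\lfloor \alpha(k)\,2^{n}\rfloor\,2^{-n},
\]
the largest rational of the form $j\,2^{-n}$, $j\in\mathbb{Z}$, not exceeding $\alpha(k)$. Because $\alpha$ is increasing, $\zeta$ is non-decreasing, and because $\alpha(0)\le\alpha(k)<M$ for every $k$, all the values $\zeta(k)$ lie in the finite set $\{\,j\,2^{-n}\mid \lfloor\alpha(0)2^{n}\rfloor\le j\le\lfloor M2^{n}\rfloor\,\}$; hence $\zeta\in\mathbf{Q}$, and the associated oriented real $\hat\zeta$, with $\hat\zeta(k)=\zeta(k)-\frac{1}{k+1}$, is well defined. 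Everything here is constructive, since on rationals the floor function and the comparisons used are decidable.

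Next I would check the easy inequality $\hat\zeta\le\alpha$. For each $m$, the witness $k=m$ works: $\hat\zeta(m)=\zeta(m)-\frac{1}{m+1}\le\alpha(m)-\frac{1}{m+1}<\alpha(m)$, using $\zeta(m)\le\alpha(m)$. By Definition~\ref{o-cut} this yields $\hat\zeta\le\alpha$.

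The real content is in the other inequality $\alpha\le\hat\zeta+2^{-n}$, that is, $\forall m\,\exists k\,(\alpha(m)<\hat\zeta(k)+2^{-n})$. Here I exploit that $\alpha$ is \emph{strictly} increasing (the ``time cannot rest'' clause): set $\delta=\alpha(m+1)-\alpha(m)$, a positive rational, and pick $k$ with $k\ge m+1$ and $\frac{1}{k+1}<\delta$. Then $\alpha(k)\ge\alpha(m+1)$, and from $\zeta(k)>\alpha(k)-2^{-n}$ one gets
\[
\hat\zeta(k)+2^{-n}=\zeta(k)-\tfrac{1}{k+1}+2^{-n}>\alpha(k)-\tfrac{1}{k+1}\ge\alpha(m+1)-\tfrac{1}{k+1}>\alpha(m+1)-\delta=\alpha(m),
\]
so this $k$ is the required witness. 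Putting the two inequalities together gives $\hat\zeta\le\alpha\le\hat\zeta+2^{-n}$ with $\hat\zeta$ coming from an element of $\mathbf{Q}$, and since $n$ was arbitrary this is exactly the assertion that $\alpha$ can be approximated by almost rationals.

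I expect the only genuine obstacle to be the last step: one has to ``wait long enough'' — choose $k$ large — to turn the strict increase of $\alpha$ into the demanded $2^{-n}$-closeness, and it is precisely the strictness of the increase (not mere monotonicity) that makes this possible. The rest is bookkeeping about floors and finite grids.
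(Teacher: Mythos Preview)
Your proof is correct and follows essentially the same approach as the paper: both round the terms of $\alpha$ down to a grid of mesh $2^{-n}$ to produce $\zeta$ (you use the standard dyadic grid $j\,2^{-n}$, the paper uses the grid $\{\alpha(0)+t\,2^{-n}:t\in\mathbb{N}\}$), observe that the image of $\zeta$ lies in a finite set, and then verify $\hat\zeta\le\alpha\le\hat\zeta+2^{-n}$. Your treatment of the second inequality is in fact more explicit than the paper's, since you carefully absorb the $-\tfrac{1}{k+1}$ shift in $\hat\zeta$ by exploiting the strict increase of $\alpha$, whereas the paper only records $\beta(k)<\zeta(k)+2^{-n}$ and leaves that step implicit.
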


\begin{proof}
Let  $\beta\in\mathbb{R}^o$, and $M\in \mathbb{Q}$ be an  upper
bound for $\beta$. We define $\zeta$ by induction. Let
$\zeta(0)=\beta(0)$, and assume we have defined $\zeta(k)$. We
define
\begin{center}
$\zeta(k+1)=\zeta(k)+2^{-n}t$, where  $t\in\mathbb{N} $ and
$\zeta(k)+2^{-n}t\leq\beta(k+1) <\zeta(k)+2^{-n}(t+1)$.
\end{center}
Note that $\zeta\in\mathbf{Q}$, since
$image(\zeta)\subseteq\{\beta(0)+2^{-n}t\mid
\beta(0)+2^{-n}t<M,t\in \mathbb{N}\}$ and the latter  is not
infinite. We claim that $\hat{\zeta} \leq \beta \wedge
\beta\leq\hat{\zeta} +2^{-n}$. For $\hat{\zeta}\leq \beta$, we
have for each $k\in \mathbb{N}$ there exists $m$ such that
$\zeta(k)< \beta(m)$. For the second clause, i.e.,
$\beta\leq\hat{\zeta} +2^{-n}$,  we have
$\beta(k)<\zeta(k)+2^{-n}$.\end{proof}

A  subset $B$ of $\mathbb{N}$ is called (intuitionistically)
enumerable if it is the image of a function on natural numbers
(see~\cite{kn:ARR}). Assuming  the Kripke schema (\cite{kn:TD},
page 236):
\begin{center}
$\KS\ \ \ \ \forall X\exists\alpha\forall n[n\in
X\leftrightarrow\exists m (\alpha(n,m)=0)]$.
\end{center}
every inhabited~\footnote{A set $B$ is inhabited if $\exists
x(x\in B)$. We indicate that by $B\#\emptyset$.} subset of natural
numbers is (intuitionistically) enumerable~\cite{kn:ARR}.

\begin{lemma}\label{l5} If $B\subseteq \mathbb{Q} $
is (intuitionistically) enumerable and bounded from above, then
there exists an oriented real number $\alpha$ such that
$A_\alpha=C(B) =\{q\in\Q\mid \exists p (p\in B\wedge q<p)\}$.
\end{lemma}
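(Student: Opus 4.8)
The plan is to reduce this directly to Lemma~\ref{bseq}. Since $B$ is (intuitionistically) enumerable, by the definition recalled just before the statement there is a function $f:\mathbb{N}\ra\mathbb{Q}$ whose image is exactly $B$, i.e. $p\in B$ iff $\exists n(p=f(n))$; in particular $B$ is inhabited, witnessed by $f(0)$, which is consistent with $C(B)$ being a genuine cut. First I would record the enumeration-style description of the target set:
\begin{center}
$C(B)=\{q\in\Q\mid \exists p(p\in B\wedge q<p)\}=\{q\in\Q\mid \exists n(q<f(n))\}$,
\end{center}
the equivalence of the two descriptions being immediate, since ranging over $p\in B$ is the same as ranging over the values $f(n)$.

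Next I would verify the hypothesis of Lemma~\ref{bseq} for $\gamma:=f$. As $B$ is bounded from above, fix a rational $M$ with $p\le M$ for all $p\in B$; then $f(n)\le M<M+1$ for every $n$, so $f$ is an upper bounded rational sequence in the sense of Lemma~\ref{bseq} (with bound $M+1$). That lemma then yields an oriented real $\alpha\in\mathbb{R}^o$ which specifies $\{q\in\Q\mid \exists n(q<f(n))\}$, that is, $A_\alpha=\{q\in\Q\mid \exists n(q<f(n))\}$. Combining with the displayed identity gives $A_\alpha=C(B)$, as desired.

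There is essentially no hard step: the substantive construction — producing a strictly increasing bounded sequence whose associated cut is $\{q\mid\exists n(q<\gamma(n))\}$ — was already carried out in Lemma~\ref{bseq} via $\alpha(n)=\max\{\gamma(0),\dots,\gamma(n)\}-\frac{1}{n+1}$. The only points needing a moment's care are (i) extracting the enumerating function $f$ from the hypothesis that $B$ is enumerable, which is exactly the definition in use here, and (ii) passing from ``bounded from above'' (phrased with $\le$) to a strict bound on the sequence, handled by the harmless shift from $M$ to $M+1$. If instead one started from an arbitrary inhabited $B\subseteq\mathbb{Q}$ bounded above, one would first invoke \KS to obtain such an $f$; but since the lemma already assumes enumerability, this is unnecessary.
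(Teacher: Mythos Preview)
Your proof is correct and follows essentially the same route as the paper's own proof: extract an enumerating sequence $\gamma$ (your $f$) from the enumerability hypothesis, note it is upper bounded, and invoke Lemma~\ref{bseq}. Your extra care about the strict-versus-nonstrict bound and the explicit identification $C(B)=\{q\mid\exists n(q<f(n))\}$ are details the paper leaves implicit, but the argument is the same.
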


\begin{proof} As $B\subseteq \mathbb{Q} $
is (intuitionistically) enumerable, there exists a sequence
$\gamma:\mathbb{N} \ra \mathbb{Q}$ such that $\forall q\in \Q[q\in
B\leftrightarrow \exists m(\gamma(m)=q)]$. Since $B$ is bounded
from above, the sequence $\gamma$ has an upper bound. By
lemma~\ref{bseq}, there exists an oriented number $\alpha$ such
that $A_\alpha=C(B)$.
\end{proof}

%%%%%%%%%%%%%%%%%%%%%%%%%%%%%%%%%%%%%
%%%%%%%%%%%%%5

Assume $B\subseteq \mathbb{Q} $ is (intuitionistically) enumerable
and upper bounded. We define the supremum of $B$, $sup(B)$, to be
the oriented real number $\alpha$  that $A_\alpha=C(B)$. Assuming
the Kripke schema, $\KS$, every upper bounded subset of
$\mathbb{Q}$ has  supremum.

\begin{proposition}$(\KS).$\label{psupremum}
Assume $B\subseteq \mathbb{Q}$ is upper bounded. Then for
$\alpha=sup(B)$, the following hold.

\begin{itemize}
\item[1.] $\forall q\in \mathbb{Q}(q\in B\rightarrow q\leq
\alpha)$,

\item[2.] $\forall p\in \mathbb{Q}(p<\alpha\rightarrow \exists
q\in \mathbb{Q}(q\in B\wedge p<q))$.

\end{itemize}
\end{proposition}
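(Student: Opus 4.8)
The plan is to unwind the definition of $\alpha = \sup(B)$ and translate both claims into statements about the cut $A_\alpha = C(B) = \{q \in \mathbb{Q} \mid \exists p\,(p \in B \wedge q < p)\}$, using the characterizations of the order relations between rationals and oriented reals from Proposition~\ref{inequ}. Note first that by the Kripke schema $\KS$ and Lemma~\ref{l5}, the oriented real $\alpha$ with $A_\alpha = C(B)$ exists, so the statement is not vacuous; but we should be slightly careful, since $B$ need only be upper bounded, not inhabited, and $C(B)$ could be empty. If $C(B)$ is empty there is no $\alpha \in \mathbb{R}^o$ with that cut, so implicitly we are in the case $B \# \emptyset$ (or we treat the statement as conditional on $\sup(B)$ being defined); I would add a remark to this effect.

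For item~1, fix $q \in B$ and aim to show $q \leq \alpha$, i.e.\ by Proposition~\ref{inequ}(4) that every rational $p < q$ satisfies $p < \alpha$. Given $p < q$ with $q \in B$, the pair $(q, p)$ witnesses $p \in C(B) = A_\alpha$, and $p \in A_\alpha$ is exactly $\exists n\,(p < \alpha(n))$, which by Proposition~\ref{inequ}(1) is $p < \alpha$. This direction is essentially immediate from the openness/monotonicity structure of $C(B)$.

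For item~2, fix $p$ with $p < \alpha$. By Proposition~\ref{inequ}(1), $p < \alpha$ means $\exists n\,(p < \alpha(n))$, so $p \in A_\alpha = C(B)$, which unpacks directly to $\exists q \in \mathbb{Q}\,(q \in B \wedge p < q)$ — precisely the desired conclusion. So both halves reduce to matching the definition of $C(B)$ against the two halves of Proposition~\ref{inequ}; the only real content beyond bookkeeping is the appeal to $\KS$ (via Lemma~\ref{l5}) guaranteeing that $\alpha = \sup(B)$ is a legitimate oriented real in the first place.

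I expect the main subtlety — rather than a genuine obstacle — to be the boundary case and the constructive reading of the order relations: one must resist the temptation to argue "$p < \alpha$ or $\alpha \leq p$" and instead route everything through the positive characterizations in Proposition~\ref{inequ}, and one must make explicit that the statement presupposes $\sup(B)$ exists (hence $B$ inhabited, or $C(B)$ inhabited). Once that is acknowledged, both clauses follow by direct unwinding with no case analysis and no further appeal to $\MP$ or $\WCN$.
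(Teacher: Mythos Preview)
Your proof is correct and follows essentially the same route as the paper's: both arguments unwind $A_\alpha = C(B)$ and appeal to items (1) and (4) of Proposition~\ref{inequ}, with only cosmetic differences in variable naming. Your extra remark about the implicit inhabitation hypothesis (needed for $\sup(B)$ to exist as an oriented real) is a worthwhile clarification that the paper glosses over.
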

\begin{proof} (1.) Assume $p\in B$. For any
$q<p$, we have $q\in C(B)=A_\alpha$. Then there exists $n$ such
that $q<\alpha(n)$. By items $1$ and $4$ of
proposition~\ref{inequ}, we have $p\leq \alpha$.

\noindent (2.) Assume $p<\alpha$. Then $p\in A_\alpha$. As
$A_\alpha= C(B)$, there exists $q\in B$ such that $p<q$.
\end{proof}

%%%%%%%%%%%%%%

\begin{definition}\label{infm} Let $D\subseteq \mathbb{Q}$ is lower bounded, and
$B=\{p\in\mathbb{Q}\mid \forall q (q\in D\ra q\geq p)\}$. We
define the infimum of $D$, $inf(D)$, to be the supremum of $B$.
\end{definition}

\begin{proposition}$(\KS).$ Assume $D\subseteq \mathbb{Q}$ is lower
bounded. Then for $\alpha=inf(D)$, the following hold.

\begin{itemize}
\item[1.] $\forall q\in \mathbb{Q}(q\in D\rightarrow \alpha\leq
q)$,

\item[2.] $\forall p\in \mathbb{Q}[(\forall q\in \mathbb{Q}(q\in
D\rightarrow p\leq q))\rightarrow p\leq \alpha ]$.

\end{itemize}
\end{proposition}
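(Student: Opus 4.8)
The plan is to deduce both clauses from Proposition~\ref{psupremum} applied to the auxiliary set
$B=\{p\in\mathbb{Q}\mid \forall q\,(q\in D\ra q\geq p)\}$, since by Definition~\ref{infm} we have $\alpha=inf(D)=sup(B)$. First I would record that $sup(B)$ is legitimately defined here: lower-boundedness of $D$ furnishes a rational $p_0$ with $\forall q\,(q\in D\ra q\geq p_0)$, i.e.\ $p_0\in B$, so $B$ is inhabited; and any element $q_0\in D$ is an upper bound for $B$, so $B$ is upper bounded. Hence Lemma~\ref{l5} produces an oriented real $\alpha=sup(B)$ with $A_\alpha=C(B)=\{q\in\mathbb{Q}\mid \exists p\,(p\in B\wedge q<p)\}$, and Proposition~\ref{psupremum} is available for $B$.

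For clause~1, fix $q\in D$; I want $\alpha\leq q$, which by Proposition~\ref{inequ}(2) amounts to $\alpha(n)<q$ for every $n$. Since $\alpha\in\mathbb{R}^o$ is strictly increasing, $\alpha(n)<\alpha(n+1)$, so $\alpha(n)\in A_\alpha=C(B)$; thus there is $p\in B$ with $\alpha(n)<p$. By the defining property of $B$ and $q\in D$ we get $p\leq q$, whence $\alpha(n)<q$. As $n$ was arbitrary, $\alpha\leq q$.

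For clause~2, fix $p\in\mathbb{Q}$ satisfying $\forall q\in\mathbb{Q}\,(q\in D\ra p\leq q)$. This hypothesis is literally the statement $p\in B$, so Proposition~\ref{psupremum}(1) (applied to $B$, with $\alpha=sup(B)$) gives exactly $p\leq\alpha$, which is the desired conclusion.

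I expect no serious obstacle: the argument is essentially a dualization of the supremum proposition. The one point that needs care is the bookkeeping around well-definedness of $sup(B)$ --- in particular noticing that lower-boundedness of $D$ is precisely what makes $B$ inhabited, while an element of $D$ is what bounds $B$ above --- together with the routine but necessary translation between the ordinary rational order appearing in the definition of $B$ and the relation $\leq$ between rationals and oriented reals, which is exactly what Proposition~\ref{inequ} supplies. Everything else unwinds directly from the definitions.
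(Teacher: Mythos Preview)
Your proof is correct and follows essentially the same approach as the paper: both arguments show $\alpha(n)\in A_\alpha=C(B)$ for each $n$ and use this to obtain clause~1, and both observe that the hypothesis of clause~2 is precisely $p\in B$. The one minor difference is that for clause~2 you invoke Proposition~\ref{psupremum}(1) directly, whereas the paper re-derives that conclusion by hand (showing $p_0\in C(B)=A_\alpha$ for each $p_0<p$ and then appealing to Proposition~\ref{inequ}); your route is slightly more economical but not substantively different.
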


\begin{proof} (1.) $\alpha=inf(D)$ is an oriented real such that $A_\alpha=C(B)
=\{q\in\Q\mid \exists p (p\in B\wedge q<p)\}$, where
$B=\{p\in\mathbb{Q}\mid \forall q (q\in D\ra q\geq p)\}$. Let
$q\in D$. Since $A_\alpha=\{q\in\Q\mid \exists n(q<\alpha(n))\}$
and $\alpha$ is strictly increasing, we have $\alpha(n)\in
A_\alpha$, for every $n$. By the equality $A_\alpha=C(B)$, it is
derived that for each $n$, there exists $p\in B$ such that
$\alpha(n)<p$. Then by definition of $B$, $\alpha(n)<q$.

(2.) Assume $p\in \mathbb{Q}$ is such that for all $q\in D$,
$p\leq q$. Then $p\in B$, and thus for all $p_0<p$, $p_0\in
C(B)=A_\alpha$. According to definition of $A_\alpha$, there
exists $n$ such that $p_0<\alpha(n)$. Then, $(\forall
p_0<p)\exists n (p_0<\alpha(n))$. By items $1$ and $4$ of
proposition~\ref{inequ}, we have $p\leq \alpha$.\end{proof}

%%%%

\begin{theorem}$(\KS)$. \textbf{\emph{The monotone convergence theorem}}.
For every upper bounded nondecreasing sequence $(\alpha_n)_{n\in
\mathbb{N}}$ of oriented reals, there exists an oriented real
$\alpha$ such that

\begin{itemize}
\item[$1.$] $\forall n (\alpha_n\leq \alpha)$,

\item[$2.$] $(\forall p\in \mathbb{Q})[(p<\alpha)\rightarrow
(\exists m\forall n(n\geq m\rightarrow p<\alpha_n))]$.
\end{itemize}
\end{theorem}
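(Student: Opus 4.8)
The plan is to realize $\alpha$ as a supremum in the sense of Lemma~\ref{l5} and Proposition~\ref{psupremum}, using $\KS$ to pass from the sequence $(\alpha_n)$ to an enumerable set of rationals. Concretely, I would set
\[
B=\{q\in\mathbb{Q}\mid \exists n\,(q\in A_{\alpha_n})\}=\bigcup_{n}A_{\alpha_n},
\]
the union of the cuts determined by the $\alpha_n$. First I would check that $B$ is upper bounded: since the sequence is upper bounded, there is a single $M$ with $\alpha_n<M$ for all $n$ (here I would spell out that "upper bounded nondecreasing sequence of oriented reals" gives a uniform rational bound, falling back on Proposition~\ref{inequ}(2) so that $q\in A_{\alpha_n}\Rightarrow q<M$). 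Then $B$ is an upper bounded subset of $\mathbb{Q}$, so by the discussion following Lemma~\ref{l5} (which invokes $\KS$) it has a supremum; let $\alpha=\sup(B)$, so $\alpha\in\mathbb{R}^o$ and $A_\alpha=C(B)=\{q\mid\exists p\,(p\in B\wedge q<p)\}$.

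Next I would verify clause~1, $\alpha_n\leq\alpha$ for each fixed $n$. By Definition~\ref{o-cut}, $\alpha_n\leq\alpha$ iff $A_{\alpha_n}\subseteq A_\alpha$ — more precisely iff $\forall m\exists k(\alpha_n(m)<\alpha(k))$, but it is cleanest to argue at the level of cuts. Take any $q\in A_{\alpha_n}$; by openness there is $q'\in A_{\alpha_n}$ with $q<q'$, hence $q'\in B$ and $q<q'$, so $q\in C(B)=A_\alpha$. Thus $A_{\alpha_n}\subseteq A_\alpha$, which gives $\alpha_n\leq\alpha$ (translating the set inclusion back through Proposition~\ref{inequ}, or simply noting $\alpha=^o\beta\iff A_\alpha=A_\beta$ and the analogous monotonicity of $\leq$ in terms of cuts).

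For clause~2, suppose $p\in\mathbb{Q}$ with $p<\alpha$, i.e. $p\in A_\alpha=C(B)$. Then there is $q\in B$ with $p<q$, and by definition of $B$ there is some $m$ with $q\in A_{\alpha_m}$, i.e. $q<\alpha_m$ in the sense of Proposition~\ref{inequ}(1)–(3): $\exists j(q<\alpha_m(j))$. Since $p<q$ and $q<\alpha_m(j)$, we get $p<\alpha_m(j)$, so $p<\alpha_m$. Now I use that the sequence is \emph{nondecreasing}: for every $n\geq m$ we have $\alpha_m\leq\alpha_n$, and since $p<\alpha_m\leq\alpha_n$ (combining $p<\alpha_m$, which on the cut side says $p\in A_{\alpha_m}\subseteq A_{\alpha_n}$) we conclude $p<\alpha_n$ for all $n\geq m$. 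This is exactly $\exists m\forall n(n\geq m\to p<\alpha_n)$.

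The main obstacle I anticipate is bookkeeping with the three different orderings ($<$, $\leq$, $=^o$ on $\mathbb{R}^o$) and their translations into statements about the cuts $A_\alpha$ and about the defining sequences via Proposition~\ref{inequ} — in particular making sure that "$p<\alpha_m\leq\alpha_n\Rightarrow p<\alpha_n$" is justified constructively (it is, since $p<\alpha_m$ is the strong existential $\exists j(p<\alpha_m(j))$ and $\alpha_m\leq\alpha_n$ lets one push that witness forward, or because $p<\alpha_m$ means $p\in A_{\alpha_m}$ and $A_{\alpha_m}\subseteq A_{\alpha_n}$ gives $p\in A_{\alpha_n}$, which is $p<\alpha_n$). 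The only genuinely non-elementary ingredient is the appeal to $\KS$ to guarantee that the union $B=\bigcup_n A_{\alpha_n}$, as a subset of $\mathbb{Q}$, actually has an oriented-real supremum; everything else is a routine unwinding of definitions.
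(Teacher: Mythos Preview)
Your proposal is correct and is essentially the paper's own argument: your set $B=\bigcup_n A_{\alpha_n}$ coincides (via Proposition~\ref{inequ}(1)) with the paper's $B=\{p\in\mathbb{Q}\mid\exists n\,(p<\alpha_n)\}$, you take $\alpha=\sup(B)$ under $\KS$, and your verifications of clauses~1 and~2 match the paper's, only phrased at the level of cuts rather than via the individual values $\alpha_n(m)$. The one point to tidy is to state explicitly that $\alpha\le\beta$ is equivalent to $A_\alpha\subseteq A_\beta$ (which follows immediately from the definitions), since you invoke this translation without quite proving it.
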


\begin{proof} Let $B=\{p\in \mathbb{Q}\mid \exists n
(p<\alpha_n)\}$. The set $B$ is upper bounded, so let
$\alpha=sup(B)$. For all $n,m\in \mathbb{N}$, we have
$\alpha_n(m)\in B$, since $\alpha_n$ is a strictly decreasing
sequence of rationals. By proposition~\ref{psupremum}, we have for
each $n$, for all $m$, $\alpha_n(m)<\alpha_n(m+1)\leq\alpha$. So,
by items (1) and (4) of proposition~\ref{inequ}, for each $n$,
$\alpha_n\leq\alpha$.

\noindent If $p<\alpha$, then by proposition~\ref{psupremum},
there exists $q\in B$ such that $p<q$. By definition of $B$, there
exists $m\in \mathbb{N}$, $p<q<\alpha_m$. As $(\alpha_n)_{n\in
\mathbb{N}}$ is a non-decreasing sequence, we have for all $n\geq
m$, $p<\alpha_n$.
\end{proof}

Among the Dedekind lines $\mathbb{R}^e$, $\mathbb{R}^{be}$, and
$\mathbb{R}^d$ introduced in~\cite{kn:TD}, the line
$\mathbb{R}^{be}$ is much similar to $\mathbb{R}^o$. The only
difference between the cuts of $\mathbb{R}^{be}$ and the cuts of
$\mathbb{R}^o$ is that the first one satisfies \emph{strong
monotonicity},
%; for each cut $S$
%\begin{center}
%$\forall r\forall s((r<s)\wedge \neg\neg s\in S\rightarrow r\in
%S)$;
%\end{center}
whereas the second one just fulfils monotonicity. On the other
hand, as we have already noted, the reals in $\mathbb{R}^{be}$
cannot be approximated by rationals, as   desired   as a
requirement for the temporal line. Hence, the line
$\mathbb{R}^{be}$ could be assumed as an appropriate mathematical
model for the temporal line if there was a \emph{continuity
principle} for it, like Brouwer's continuity principle for
\emph{Cauchy} reals $\mathbb{R}$. We note that the line
$\mathbb{R}$ is the \emph{Cauchy} completion of $\mathbb{Q}$ and
the line $\mathbb{R}^{be}$ is the \emph{order completion} of
$\mathbb{Q}$.

Let us define a mapping $\Psi: \mathbb{R}^o\rightarrow
\mathbb{R}^{be}$ as follows:
\begin{center}
$\Psi(\alpha):=\{r\in \mathbb{Q}\mid (\exists s\in
\mathbb{Q})[r<s\wedge \neg\neg\exists n (s<\alpha(n))] \}$.
\end{center}
One can easily check that  $\Psi$ is well-defined and  for all
$\alpha\in \mathbb{R}^o$, $\Psi(\alpha)\in \mathbb{R}^{be}$.

\begin{proposition}$(\KS)$. The mapping $\Psi: \mathbb{R}^o\rightarrow
\mathbb{R}^{be}$ is surjective.
\end{proposition}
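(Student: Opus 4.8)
The plan is to lean on the machinery already assembled. Given an arbitrary cut $S\in\mathbb{R}^{be}$, I would produce an oriented real $\alpha\in\mathbb{R}^o$ with $A_\alpha=C(S)$ and then verify $\Psi(\alpha)=S$. For the first step, note that a bounded cut $S$ is inhabited and bounded above: it has a non-member $q$, and monotonicity forces every element of $S$ to be $<q$. Since $S$ is thus an inhabited subset of $\mathbb{Q}$, the Kripke Schema $\KS$ makes it (intuitionistically) enumerable, so Lemma~\ref{l5} applies directly and gives $\alpha\in\mathbb{R}^o$ with $A_\alpha=C(S)=\{q\in\mathbb{Q}\mid\exists p\,(p\in S\wedge q<p)\}$; alternatively, enumerate $S$ by a sequence $\gamma$, observe $\gamma$ is upper bounded, and invoke Lemma~\ref{bseq}. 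It is convenient to also record $C(S)=S$: if $q\in C(S)$ then $q<p$ for some $p\in S$, and strong monotonicity (which implies monotonicity) yields $q\in S$; conversely openness of $S$ gives, for $q\in S$, some $p\in S$ with $q<p$, so $q\in C(S)$.

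For the second step I would compute $\Psi(\alpha)=\{r\mid\exists s\,(r<s\wedge\neg\neg(s\in A_\alpha))\}=\{r\mid\exists s\,(r<s\wedge\neg\neg(s\in C(S)))\}$ and check both inclusions. For $S\subseteq\Psi(\alpha)$: if $r\in S$, openness gives $p\in S$ with $r<p$; choose a rational $s$ with $r<s<p$, so $s\in C(S)=A_\alpha$, hence $\neg\neg(s\in A_\alpha)$, so $r\in\Psi(\alpha)$. For $\Psi(\alpha)\subseteq S$: suppose $r<s$ and $\neg\neg(s\in C(S))$, i.e. $\neg\neg\exists p\,(p\in S\wedge s<p)$; since $\exists p\,(p\in S\wedge s<p)\to s\in S$ by monotonicity, we get $\neg\neg(s\in S)$, and then strong monotonicity applied to $r<s\wedge\neg\neg(s\in S)$ yields $r\in S$. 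Combining the two inclusions gives $\Psi(\alpha)=S$, which is exactly surjectivity since equality on $\mathbb{R}^{be}$ is equality of cuts.

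The one delicate point — the step I expect to require the most care — is the inclusion $\Psi(\alpha)\subseteq S$, because it forces a double negation to be pushed through an existential quantifier. It goes through precisely because the statement $\exists p\,(p\in S\wedge s<p)$ already implies the negation-stable statement $s\in S$ (by monotonicity), so $\neg\neg$ of the former delivers $\neg\neg(s\in S)$, which is exactly the hypothesis that strong monotonicity of the $\mathbb{R}^{be}$-cut $S$ consumes. In other words, the $\neg\neg$ in the definition of $\Psi$ is tailored to make $\Psi(\alpha)$ land in $\mathbb{R}^{be}$, and that same tailoring is what makes $\Psi$ hit every such cut. Everything else is routine: the openness and boundedness properties of $\mathbb{R}^{be}$-cuts and the density of $\mathbb{Q}$, together with the already-proved Lemmas~\ref{bseq} and~\ref{l5}.
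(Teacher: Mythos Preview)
Your proof is correct and follows essentially the same route as the paper's: use $\KS$ to enumerate $S$, build $\alpha$ via the $\max$-minus-$\frac{1}{n+1}$ construction (you invoke Lemma~\ref{l5}/\ref{bseq} rather than unwinding it), and verify $\Psi(\alpha)=S$ by combining openness with strong monotonicity to absorb the $\neg\neg$. Your extra observation $C(S)=S$ streamlines the verification slightly compared with the paper, which instead argues directly from the explicit form of $\alpha$, but the logical content is the same.
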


\begin{proof}
Assuming the Kripke schema,
%(\cite{kn:TD}, page 236):
%\begin{center}
%$\mathbf{KS}\ \ \ \ \forall X\exists\alpha\forall n[n\in
%X\leftrightarrow\exists m (\alpha(n,m)=0)]$,
%\end{center}
it can be shown that every inhabited subset of natural numbers is
(intuitionistically) enumerable~\cite{kn:ARR}.

Let $S\in \mathbb{R}^{be}$, and $\gamma:\mathbb{N} \ra S$
enumerates $S$, i.e., $\forall q\in \mathbb{Q}[ q\in
S\leftrightarrow \exists m(\gamma(m)=q)]$. Define
$$\alpha(n)=max\{\gamma(0),\gamma(1),...,\gamma(n)\}-\frac{1}{n+1}.$$
The sequence $\alpha$ is in $\mathbb{R}^o$. We claim
$\Psi(\alpha)=S$.

Assume $r\in \Psi(\alpha)$. Then there exists $s\in \mathbb{Q}$
such that $r<s$ and  $\neg\neg \exists n(s<\alpha(n))$. For the
sake of argument, assume $ \exists n(s<\alpha(n))$. Note that for
all $m$, $\alpha(m)\in S$. By the \emph{monotonicity} of $S$, we
have $s\in S$. Thus $\neg\neg s\in S$. By the \emph{strong
monotonicity}, we have $r\in S$.

For the converse, assume $r\in S$.  Let $k$ be such that
$\gamma(k)=r$. By openness, there exists $r'\in S$ such that
$r<r'$, and let  $\gamma(m)=r'$ for some $m$. Choose
 $n>m$ such that $r'-r<\frac{1}{n+1}$. Then we have
$r<\alpha(n)$ and thus $r\in \Psi(\alpha)$.
\end{proof}

\begin{proposition} For all $\alpha,\beta\in \mathbb{R}^o$
\begin{center}
$\alpha<\beta$ implies $\Psi(\alpha)<\Psi(\beta)$.
\end{center}
\end{proposition}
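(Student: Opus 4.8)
The plan is to unwind the definitions of the strict order on $\mathbb{R}^o$ and on $\mathbb{R}^{be}$, and to transport a "separating rational" from one side to the other through the explicit formula defining $\Psi$. Recall that $\alpha<\beta$ in $\mathbb{R}^o$ means, by Definition~\ref{o-cut}, that there is some index $n_0$ with $\alpha(m)<\beta(n_0)$ for all $m$; equivalently, by Proposition~\ref{inequ}(2), $\alpha\leq q$ for $q=\beta(n_0)$, and also $q\in A_\beta$ since $q=\beta(n_0)<\beta(n_0+1)$. So the strictness of $\alpha<\beta$ is witnessed by an honest rational $q$ with $\alpha\leq q$ and $q\in A_\beta$, and by openness we may even pick $q'\in A_\beta$ with $q<q'$. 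The goal is to produce a rational $r$ with $r\notin\Psi(\alpha)$ and $r\in\Psi(\beta)$, together with the monotonicity needed to conclude $\Psi(\alpha)<\Psi(\beta)$ in the sense of the strict order on $\mathbb{R}^{be}$ (which again unwinds to: some rational lies outside $\Psi(\alpha)$ but inside $\Psi(\beta)$, using that $\Psi(\beta)$ is open and monotone).

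First I would fix $n_0$ with $\forall m\,(\alpha(m)<\beta(n_0))$, set $q=\beta(n_0)$ and pick $s$ with $q<s<\beta(n_0+1)$ (possible since $\beta$ is strictly increasing in $\mathbb{Q}$), so that $s\in A_\beta$, i.e. $\exists n\,(s<\beta(n))$, hence a fortiori $\neg\neg\exists n\,(s<\beta(n))$. Then for any rational $r$ with $q<r<s$ we have $r<s\wedge\neg\neg\exists n\,(s<\beta(n))$, so $r\in\Psi(\beta)$ directly from the defining formula. For the other half I would show $r\notin\Psi(\alpha)$: suppose $r\in\Psi(\alpha)$, so there is $s'$ with $r<s'$ and $\neg\neg\exists n\,(s'<\alpha(n))$. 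But $\alpha(m)<\beta(n_0)=q<r<s'$ for all $m$, so $\forall m\,(\alpha(m)<s')$ — classically this contradicts $\exists n\,(s'<\alpha(n))$, and since a double negation of a decidable-in-context statement yields a contradiction with its negation, we get $\neg\neg\exists n\,(s'<\alpha(n))$ is false, contradicting $r\in\Psi(\alpha)$. Hence $r\notin\Psi(\alpha)$.

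Having a single rational $r$ with $r\notin\Psi(\alpha)$ and $r\in\Psi(\beta)$, I would then recall what $\Psi(\alpha)<\Psi(\beta)$ means for extended Dedekind reals: there should be rationals $p<p'$ with $p'\notin\Psi(\alpha)$ and $p\in\Psi(\beta)$. Using openness of $\Psi(\beta)$ pick $r'\in\Psi(\beta)$ with $r<r'$; using monotonicity of $\Psi(\alpha)$, from $r\notin\Psi(\alpha)$ we get $p\notin\Psi(\alpha)$ for every $p\geq r$ as well. Taking $p=r$, $p'=r'$ — wait, we need $p'\notin\Psi(\alpha)$ and $p'>p$; since $r'>r\notin\Psi(\alpha)$, monotonicity of $\Psi(\alpha)$ gives $r'\notin\Psi(\alpha)$ too, and $r\in\Psi(\beta)\subseteq\Psi(\beta)$ with $r<r'$ — so the pair $(r,r')$ witnesses $\Psi(\alpha)<\Psi(\beta)$. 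This finishes the proof.

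The main obstacle I anticipate is constructive bookkeeping around the double negations in the definition of $\Psi$: one must be careful that showing "$r\notin\Psi(\alpha)$" only requires deriving a contradiction from $\neg\neg\exists n\,(s'<\alpha(n))$ together with $\forall m\,(\alpha(m)<s')$, and that this is intuitionistically fine since $\forall m\,(\alpha(m)<s')$ already refutes $\exists n\,(s'<\alpha(n))$ outright, so its double negation is refuted as well (no Markov principle needed here). A secondary point of care is matching the exact rendering of the strict order $<$ on $\mathbb{R}^{be}$ used in \cite{kn:TD}; whichever equivalent formulation is adopted, the pair of separating rationals $(r,r')$ produced above supplies the required witness after invoking openness and monotonicity of the cut $\Psi(\beta)$.
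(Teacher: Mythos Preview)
Your argument is correct in substance but more roundabout than the paper's, and your final step needs to be made explicit rather than deferred. The paper works directly with the definition $S<T:\Leftrightarrow\exists r>0\,(S+r\subset T)$ from \cite{kn:TD}: having fixed $n$ with $\forall m\,(\alpha(m)<\beta(n))$, it simply takes $r=\beta(n{+}1)-\beta(n)$ and verifies $\Psi(\alpha)+r\subset\Psi(\beta)$ in one line. You instead produce separating rationals $r<r'$ with $r\notin\Psi(\alpha)$, $r\in\Psi(\beta)$, $r'\notin\Psi(\alpha)$, $r'\in\Psi(\beta)$. Be aware that the criterion you actually state --- ``$p<p'$ with $p'\notin\Psi(\alpha)$ and $p\in\Psi(\beta)$'' --- is not literally the additive definition and is not obviously equivalent to it constructively; what actually closes your argument is the \emph{crossed} pair $r\notin\Psi(\alpha)$ and $r'\in\Psi(\beta)$: for any $x\in\Psi(\alpha)$ one has $x<r$ (decidable rational comparison plus downward closure, else $r\in\Psi(\alpha)$), hence $x+(r'-r)<r'\in\Psi(\beta)$, giving $\Psi(\alpha)+(r'-r)\subset\Psi(\beta)$. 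You have all this data, so the proof is sound, but you should spell this out rather than leave it as ``whichever equivalent formulation is adopted''. The paper's direct use of the additive gap $\beta(n{+}1)-\beta(n)$ buys brevity by avoiding this translation step entirely; your route buys a more explicit picture of the separating rationals and of why no Markov-type principle is needed for the $\notin\Psi(\alpha)$ half.
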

\begin{proof}
The relation ``$<$" for cuts $S,T$ is defined as follows:
$S<T:=\exists r>0(S+r\subset T)$ (see definition~5.4
of~\cite{kn:TD}). Suppose $\alpha<\beta$. Then for some $n$, we
have $\forall m(\alpha(m)<\beta(n))$. Let
$r=\beta(n+1)-\beta(n)$. It is easy to check that
$\Psi(\alpha)+r\subset\Psi(\beta)$.
\end{proof}

\subsection{Arithmetic in $\mathbb{R}^{o}$}

As explained in the introduction, we propose $\mathbb{R}^{o}$
as a model for the temporal line. Each oriented real illustrates
a moment. Then what does it mean to add or multiply two moments?
What is a proper arithmetic of the temporal line?

The real line $\mathbb{R}^{be}$ is a \emph{field} due to the
\emph{strong monotonicity} property of its elements. A field
$\langle F,+,\cdot\rangle$ is an algebraic structure with two
\emph{functions} $+$ and $\cdot$ from $F\times F$ to $F$, where
$F$ equipped with the functions $+$ or $\cdot$, is a group, and
$\cdot$ is distributed over $+$.  In our model, $\mathbb{R}^{o}$,
the operations $+$ and $\cdot$ are \emph{relations} instead of
being functions, i.e, it is an algebraic structure known as a\emph{hyperstructure}~\footnote{For similar definitions and
applications of hyper-algebraic structures as arithmetic,
see~\cite{kn:krasn,kn:conn,kn:conn2}.}.

\begin{definition} A \emph{h-field} is a tuple $\langle
F,+,\ast,0,1\rangle$ where $+\subseteq F\times F\times F$ and
$\ast\subseteq F\times F\times F$ satisfy the following axioms:
\begin{itemize}
\item[-] \emph{Inhabitance}.
\[
\begin{array}{l}\begin{array}{ccccc}
 (\forall x,y)(\exists z) +(x,y,z) & &(\forall x,y)(\exists z )
 \ast(x,y,z).
\end{array}\quad\quad\quad\quad\quad\quad\quad\quad\quad\quad\quad\quad\quad
\end{array} \]
\item[-] \emph{Identity}.
\[
\begin{array}{l}\begin{array}{ccccc}
 (\forall x) +(x,0,x) & &(\forall x )
 \ast(x,1,x).
\end{array}\quad\quad\quad\quad\quad\quad\quad\quad\quad\quad\quad\quad\quad
\end{array} \]
\item[-] \emph{Inverse}.
\[
\begin{array}{l}\begin{array}{ccccc}
 (\forall x\exists y) +(x,y,0) & &(\forall x\exists y )
 \ast(x,y,1).
\end{array}\quad\quad\quad\quad\quad\quad\quad\quad\quad\quad\quad\quad\quad
\end{array} \]
\item[-]\emph{Commutativity}.
\[
\begin{array}{l}\begin{array}{ccccc}
 (\forall x,y,z)( +(x,y,z)\leftrightarrow +(y,x,z)) & &
 (\forall x,y,z)( \ast(x,y,z)\leftrightarrow \ast(y,x,z)).
\end{array}\quad\quad\quad\quad\quad\quad\quad\quad\quad\quad\quad\quad\quad
\end{array} \]

\item[-] \emph{Associativity}.
\[
\begin{array}{l}\begin{array}{ccccc}
 (\forall x,y,z,w,v,u)(( +(x,y,w)\wedge +(w,z,v)\wedge +(y,z,u))\rightarrow +(x,u,v))
 \\
 (\forall x,y,z,w,v,u)(( +(x,y,w)\wedge +(x,u,v)\wedge +(y,z,u))\rightarrow +(w,z,v))
\end{array}\quad\quad\quad\quad\quad\quad\quad\quad\quad\quad\quad\quad\quad
\end{array} \]

\[
\begin{array}{l}\begin{array}{ccccc}
 (\forall x,y,z,w,v,u)(( \ast(x,y,w)\wedge \ast(w,z,v)\wedge \ast(y,z,u))\rightarrow \ast(x,u,v))
 \\
 (\forall x,y,z,w,v,u)(( \ast(x,y,w)\wedge \ast(x,u,v)\wedge \ast(y,z,u))\rightarrow
 \ast(w,z,v)).
\end{array}\quad\quad\quad\quad\quad\quad\quad\quad\quad\quad\quad\quad\quad
\end{array} \]

\item[-]\emph{Distributivity}.
\[
\begin{array}{l}\begin{array}{ccccc}
 (\forall x,y,z,w,v,u,r)(( \ast(x,v,w)\wedge +(y,z,v)\wedge \ast(x,y,u)\wedge
  \ast(x,z,r))\rightarrow +(u,r,w))
 \\
 (\forall x,y,z,w,v,u,r)((+(u,r,w) \wedge +(y,z,v)\wedge \ast(x,y,u)\wedge
  \ast(x,z,r))\rightarrow \ast(x,v,w)).
\end{array}\quad\quad\quad\quad\quad\quad\quad\quad\quad\quad\quad\quad\quad
\end{array} \]
\end{itemize}
\end{definition}
We define an \emph{addition relation} $+$, and a
\emph{multiplication relation} $\ast$ on $\mathbb{R}^o$ as
follows:

For $\alpha,\beta,\gamma\in \mathbb{R}^o$, we let

\begin{itemize}
\item[1.] $+(\alpha,\beta,\gamma)$ if and only if
$\Psi(\alpha)+\Psi(\beta)=\Psi(\gamma)$, and

\item[2.]$\ast(\alpha,\beta,\gamma)$ if and only if
$\Psi(\alpha)\cdot\Psi(\beta)=\Psi(\gamma)$.
\end{itemize}

\begin{proposition}$(\KS)$. $\langle\mathbb{R}^o,+,\ast,\hat{0},\hat{1}\rangle$ is a h-field.

\end{proposition}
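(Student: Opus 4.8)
The plan is to transport the field structure of $\mathbb{R}^{be}$ along the surjection $\Psi\colon\mathbb{R}^o\to\mathbb{R}^{be}$, since by construction $+$ and $\ast$ on $\mathbb{R}^o$ are precisely the $\Psi$-preimages of addition and multiplication on $\mathbb{R}^{be}$. Before touching the axioms I would settle two preliminary facts. First, $\Psi(\hat 0)$ and $\Psi(\hat 1)$ are the additive and multiplicative units of $\mathbb{R}^{be}$: unwinding the definition of $\Psi$ with $\hat q(n)=q-\frac{1}{n+1}$, the clause $\neg\neg\exists n\,(s<\hat q(n))$ collapses to the decidable statement $s<q$, so $\Psi(\hat q)=\{r\in\mathbb{Q}\mid r<q\}$; in particular $\Psi(\hat 0)=\{r\mid r<0\}$ and $\Psi(\hat 1)=\{r\mid r<1\}$. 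Second, since $\Psi$ is well-defined with respect to $=^o$, the relations $+$ and $\ast$ respect $=^o$ in each argument, so they are genuine relations on the quotient and $=^o$-equal oriented reals may be substituted freely in every axiom.

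With these in hand, the two purely existential axioms, \emph{Inhabitance} and the additive part of \emph{Inverse}, are exactly where surjectivity of $\Psi$ --- and hence $\KS$ --- enters. Given $\alpha,\beta\in\mathbb{R}^o$, the elements $\Psi(\alpha)+\Psi(\beta)$ and $\Psi(\alpha)\cdot\Psi(\beta)$ belong to $\mathbb{R}^{be}$ because it is a field; surjectivity of $\Psi$ yields $\gamma,\gamma'$ with $\Psi(\gamma)=\Psi(\alpha)+\Psi(\beta)$ and $\Psi(\gamma')=\Psi(\alpha)\cdot\Psi(\beta)$, establishing $+(\alpha,\beta,\gamma)$ and $\ast(\alpha,\beta,\gamma')$. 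For the additive inverse, $-\Psi(\alpha)\in\mathbb{R}^{be}$, so surjectivity gives $\beta$ with $\Psi(\beta)=-\Psi(\alpha)$, and then $\Psi(\alpha)+\Psi(\beta)=0=\Psi(\hat 0)$, i.e.\ $+(\alpha,\beta,\hat 0)$.

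The remaining axioms --- \emph{Identity}, \emph{Commutativity}, \emph{Associativity}, \emph{Distributivity}, and the multiplicative part of \emph{Inverse} --- are implicational and follow by a mechanical translation: replace each atom $+(x,y,z)$ by $\Psi(x)+\Psi(y)=\Psi(z)$ and each $\ast(x,y,z)$ by $\Psi(x)\cdot\Psi(y)=\Psi(z)$, whereupon every hypothesis becomes a list of equations in $\mathbb{R}^{be}$ and the conclusion is deduced from the field axioms of $\mathbb{R}^{be}$ together with the computations $\Psi(\hat 0)=0$, $\Psi(\hat 1)=1$. For instance, the first associativity clause says: if $\Psi(x)+\Psi(y)=\Psi(w)$, $\Psi(w)+\Psi(z)=\Psi(v)$ and $\Psi(y)+\Psi(z)=\Psi(u)$, then $\Psi(x)+\Psi(u)=\Psi(x)+(\Psi(y)+\Psi(z))=(\Psi(x)+\Psi(y))+\Psi(z)=\Psi(w)+\Psi(z)=\Psi(v)$; commutativity and distributivity are identical in spirit.

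I expect no deep obstacle --- the entire content is already packed into surjectivity of $\Psi$ and the field axioms of $\mathbb{R}^{be}$ --- so the only points deserving care are bookkeeping ones: verifying $\Psi(\hat 0)=0$ and $\Psi(\hat 1)=1$ cleanly, and reading the multiplicative \emph{Inverse} clause correctly, namely for $\alpha$ apart from $\hat 0$ (equivalently $\hat 0<\alpha\vee\alpha<\hat 0$, so that $\Psi(\alpha)$ is invertible in $\mathbb{R}^{be}$), where surjectivity then supplies $\beta$ with $\Psi(\beta)=\Psi(\alpha)^{-1}$ and hence $\ast(\alpha,\beta,\hat 1)$.
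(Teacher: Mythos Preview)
Your proposal is correct and follows exactly the route the paper intends: the paper's own proof is the single sentence ``The proof is straightforward by using the fact that $\mathbb{R}^{be}$ is a field,'' and you have spelled out precisely that argument, using surjectivity of $\Psi$ (the only place $\KS$ is needed) for the existential axioms and translating the implicational axioms through $\Psi$ into the field laws of $\mathbb{R}^{be}$. Your remark on the multiplicative inverse---that it should be read for $\alpha$ apart from $\hat 0$---is a fair caveat, since the paper's h-field axiom is stated as an unrestricted $\forall x\exists y\,\ast(x,y,1)$ even though $\hat 0$ evidently has no inverse; this is a minor imprecision in the paper's definition rather than a flaw in your argument.
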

\begin{proof}
The proof is straightforward by using the fact that
$\mathbb{R}^{be}$ is a field.
\end{proof}

\subsection{The Oriented Continuity Principle}\label{tocp}

  In this part, we propose  the oriented continuity principle which
expresses formally our sense of the notion of   orientation. To
do this, we  study total functions $\Phi$ from $(0,1]^o$ to
$\mathbb{N}^\ast$, where $(0,1]^o=\{\alpha\in \mathbb{R}^o\mid
\hat{0}<\alpha\leq \hat{1}\}$. Similarly, we let
$[0,1]^o=\{\alpha\in\mathbb{R}^o\mid \hat{0}\leq\alpha\leq
\hat{1}\}$. We assume such function $\Phi$ is well-defined, i.e.,
$\alpha=^o\beta$ implies $\Phi(\alpha)=^\ast\Phi(\beta)$.

\begin{lemma}\label{l4} $(\WCN)$. Let $\Phi\in(0,1]^o\ra
\mathbb{N}^\ast$ be total. Then $\forall \alpha,\beta\in(0,1]^o
(\alpha\leq\beta\ra \Phi(\alpha)\leq \Phi(\beta))$.
\end{lemma}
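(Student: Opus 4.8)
The plan is to use the Weak Continuity Principle on the total function $\Phi$ restricted to a suitable spread, exactly in the spirit of Proposition~\ref{CtoN}, but this time tracking the \emph{order} information rather than collapsing it. Fix $\alpha,\beta\in(0,1]^o$ with $\alpha\leq\beta$. The set $(0,1]^o$ (or rather the set of $\gamma\in\mathbb{R}^o$ with $\gamma\leq\hat 1$ and $\hat 0<\gamma$, up to the usual coding) carries a spread structure, since the admissible finite sequences are the strictly increasing rational tuples bounded by $1$; call it $\T$. Since $\Phi$ is total on this spread, $\WCN$ gives a modulus $t$ such that for every $\gamma\in\T$, if $\bar\gamma t=\bar\alpha t$ then $\Phi(\gamma)=^\ast\Phi(\alpha)$.

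**Key steps.** First I would spell out the spread structure on $(0,1]^o$ and check that $\Phi$ being total there lets us invoke $\WCN$ to extract the modulus $t$ at the point $\alpha$. Second, and this is the crux, I would construct an oriented real $\gamma$ that (i) passes through the initial segment $\bar\alpha t$, so that $\Phi(\gamma)=^\ast\Phi(\alpha)$, and (ii) satisfies $\gamma=^o\beta$, so that by well-definedness $\Phi(\gamma)=^\ast\Phi(\beta)$. The construction of $\gamma$ is where the hypothesis $\alpha\leq\beta$ is essential: because $\alpha\leq\beta$ means $\forall m\exists n(\alpha(m)<\beta(n))$, the cut $A_\alpha$ is contained in $A_\beta$, so a sequence that begins with $\alpha(0),\dots,\alpha(t-1)$ and then "catches up" to $\beta$ still specifies the cut $A_\beta$. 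Concretely one can take $\gamma(n)=\alpha(n)$ for $n<t$ and, for $n\geq t$, interleave/merge the tail values of $\beta$ with a strictly increasing correction so that $\gamma$ stays strictly increasing, stays below $\hat 1$, and has $A_\gamma=A_\beta$; Lemma~\ref{bseq} (applied to the concatenation of $\bar\alpha t$ with the tail of $\beta$, which is upper bounded) produces such a $\gamma\in\mathbb{R}^o$ directly, and one checks $\gamma=^o\beta$ because $\alpha(i)<\alpha(t-1)<\beta(n)$ for suitable $n$ (using $\alpha\leq\beta$ and strict increase of $\alpha$) so the early terms do not enlarge the cut beyond $A_\beta$.

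**The main obstacle.** The delicate point is guaranteeing simultaneously that $\gamma$ is a legitimate element of the \emph{spread} $\T$ (strictly increasing, values in $(0,1]$, agreeing with $\bar\alpha t$) \emph{and} that $A_\gamma=A_\beta$ exactly — not merely $A_\gamma\supseteq A_\beta$. The containment $A_\beta\subseteq A_\gamma$ is immediate once the tail of $\gamma$ dominates the tail of $\beta$; the reverse containment is where $\alpha\leq\beta$ must be used, to ensure the prepended terms $\alpha(0),\dots,\alpha(t-1)$ already lie in $A_\beta$ and hence contribute nothing new. Once $\gamma$ is in hand we are done: $\Phi(\alpha)=^\ast\Phi(\gamma)=^\ast\Phi(\beta)$, and then from $\Phi(\alpha)=^\ast\Phi(\beta)$ together with the definition of $\leq$ on $\mathbb{N}^\ast$ (which is reflexive) we get the desired $\Phi(\alpha)\leq\Phi(\beta)$. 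One should double-check the edge case $t=0$ or where $\alpha(t-1)$ already exceeds some $\beta(n)$; in those cases the construction is only easier. I do not expect any genuine difficulty beyond this bookkeeping, since the pattern faithfully mirrors the proof of Proposition~\ref{CtoN}, only keeping the inequality $\Phi(\alpha)\leq\Phi(\beta)$ instead of forcing equality in both directions.
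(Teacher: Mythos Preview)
There is a genuine gap at the very first step. The principle $\WCN$ as formulated in the paper applies to functions with values in $\mathbb{N}$, not in $\mathbb{N}^\ast$; the equality in its conclusion is literal equality of natural numbers. You cannot invoke it once to obtain a single modulus $t$ with $\bar\gamma t=\bar\alpha t\Rightarrow\Phi(\gamma)=^\ast\Phi(\alpha)$. What you \emph{can} do is, for each fixed coordinate $n$, apply $\WCN$ to the $\mathbb{N}$-valued map $\delta\mapsto\Phi(\delta)(n)$ and obtain a modulus $t=t_n$ with $\bar\gamma t_n=\bar\alpha t_n\Rightarrow\Phi(\gamma)(n)=\Phi(\alpha)(n)$. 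The modulus depends on $n$ and there is no reason the $t_n$ are bounded, so the uniform conclusion you wrote down simply does not follow.

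This is not a cosmetic issue: your stated conclusion $\Phi(\alpha)=^\ast\Phi(\beta)$ whenever $\alpha\leq\beta$ is \emph{false} in general. Combining it with the $\min$-trick from Proposition~\ref{CtoN} (take $\gamma(n)=\min(\alpha(n),\beta(n))$, so $\gamma\leq\alpha$ and $\gamma\leq\beta$) would force every total $\Phi:(0,1]^o\to\mathbb{N}^\ast$ to be constant, in direct contradiction with the non-constant example constructed just before this lemma. The paper's proof avoids this by working coordinate-wise: for each $n$ and $k=\Phi(\alpha)(n)$ it applies $\WCN$ to $\delta\mapsto\Phi(\delta)(n)$, builds the auxiliary $\lambda$ through $\bar\alpha t$ with $\lambda=^o\beta$ exactly as you describe, and concludes only that $k\in N_{\Phi(\beta)}:=\{m\mid\exists n'\,(m\leq\Phi(\beta)(n'))\}$. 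Letting $n$ vary gives $N_{\Phi(\alpha)}\subseteq N_{\Phi(\beta)}$, which is precisely $\Phi(\alpha)\leq\Phi(\beta)$. Your construction of the bridging sequence $\gamma$ (your $\lambda$) is fine; it is only the appeal to $\WCN$ at the level of $\mathbb{N}^\ast$ that needs to be replaced by this coordinate-wise argument.
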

\begin{proof} For $\theta\in \mathbb{N}^\ast$ define $N_\theta=
\{m\in \mathbb{N}\mid \exists n (m\leq \theta(n))\}$. We need to
show that   $\alpha\leq \beta$ implies $N_{\Phi(\alpha)}\subseteq
N_{\Phi(\beta)}$. Assume $\Phi(\alpha)(n)=k$ for some
$n,k\in\mathbb{ N}$. The set $(0,1]^o$ is a spread and $\Phi$ is
total, so by \WCN we can find a $t$ such that for each $\delta\in
(0,1]^o$, $\bar{\delta}t=\bar{\alpha}t$ implies
$\Phi(\delta)(n)=\Phi(\alpha)(n)$. We have,  for each $\delta\in
(0,1]^o$, if $\bar{\delta}t=\bar{\alpha}t$ then $k\in
N_{\Phi(\delta)}$. Since $\alpha\leq \beta$ there exists
$\lambda\in (0,1]^o$ such that $\bar{\lambda}t=\bar{\alpha}t$ and
$\lambda=^o\beta$. Note that $\Phi$ is well-defined, therefore
$k\in N_{\Phi(\beta)}$. Since we assumed $k\in \mathbb{N}$ to be
arbitrary, we have $N_{\Phi(\alpha)}\subseteq N_{\Phi(\beta)}$.
\end{proof}

The following theorem is by W.~Veldman from our correspondence
with him.

\begin{theorem}\label{wim}$(\WCN)$.
Let $\Phi$ be a total well-defined function from $(0,1]^o$ to
$\mathbb{N}^\ast$. Then
\begin{center}
$\forall\alpha\in (0,1]^o\neg\neg\exists q\in
\mathbb{Q}[q<\alpha\wedge\forall\beta\in
(0,1]^o[(q<\beta\leq\alpha\ra \Phi(\alpha)=^\ast\Phi(\beta)]]$.
\end{center}
\end{theorem}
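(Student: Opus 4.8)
The plan is to fix $\alpha \in (0,1]^o$ and derive a contradiction from the assumption $\neg\exists q \in \mathbb{Q}[q < \alpha \wedge \forall \beta \in (0,1]^o[(q < \beta \leq \alpha) \rightarrow \Phi(\alpha) =^\ast \Phi(\beta)]]$. First I would use $\WCN$: since $\Phi$ is total on the spread $(0,1]^o$, for each coordinate $n$ of $\Phi(\alpha)$ there is a modulus $t_n$ such that any $\delta \in (0,1]^o$ agreeing with $\alpha$ on the first $t_n$ values has $\Phi(\delta)(n) = \Phi(\alpha)(n)$. By Lemma~\ref{converge}(1) applied to $\Phi(\alpha) \in \mathbb{N}^\ast$, we have $\neg\neg\exists N \forall m > N(\Phi(\alpha)(m) = \Phi(\alpha)(N))$, so double-negatively the value of $\Phi(\alpha)$ is eventually stabilized, and I may work under this stabilization to pin down a single modulus $t$ that controls $\Phi(\alpha)$ up to $=^\ast$.

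The key construction is then to build, from $\alpha$ and this modulus $t$, a rational $q$ that is a genuine candidate for the witness: take $q = \alpha(t) - \epsilon$ for a small rational $\epsilon > 0$ (or more carefully $q$ slightly below $\alpha(t)$), so that $q < \alpha$ holds by Proposition~\ref{inequ}(1). The point is that any $\beta \in (0,1]^o$ with $q < \beta \leq \alpha$ can be modified — keeping it $=^o$-equivalent to itself — into a sequence $\lambda \in (0,1]^o$ that agrees with $\alpha$ on the first $t$ coordinates (because $q < \beta$ forces $\beta$ to eventually exceed $q$, and $\beta \leq \alpha$ bounds it, so one can prepend $\bar\alpha t$ and then continue along $\beta$'s tail, as in the proof of Lemma~\ref{l4} and Proposition~\ref{CtoN}). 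By well-definedness of $\Phi$ and the modulus property, this gives $\Phi(\beta) =^\ast \Phi(\lambda) =^\ast \Phi(\alpha)$ — exactly the witnessing property for $q$, contradicting the assumption. Since we assumed the negation and derived a contradiction, the double negation $\neg\neg\exists q[\ldots]$ follows.

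The main obstacle is the interaction between the double negation from Lemma~\ref{converge}(1) and the application of $\WCN$: the stabilization point $N$ for $\Phi(\alpha)$ is only known $\neg\neg$, so I cannot constructively extract a single modulus $t$ covering "all relevant coordinates at once." The reason the statement is only proved up to $\neg\neg$ is precisely this — inside the scope of a negation I may assume $\exists N \forall m > N(\Phi(\alpha)(m) = \Phi(\alpha)(N))$, pick the modulus $t = t_N$ for that coordinate, run the argument above to contradiction, and thereby discharge the outer negation. So the delicate bookkeeping is: assume $\neg\exists q[\ldots]$; then also assume (for a further reductio, permissible since we are already proving a negation) a stabilization point $N$ for $\Phi(\alpha)$; extract $t$ from $\WCN$ for coordinate $N$; build $q$ just below $\alpha(t)$; show every $\beta$ with $q < \beta \leq \alpha$ can be routed through $\bar\alpha t$ to a $=^o$-copy, so $\Phi(\beta)(N) = \Phi(\alpha)(N)$ and hence (using stabilization and monotonicity from Lemma~\ref{l4}) $\Phi(\beta) =^\ast \Phi(\alpha)$; this contradicts $\neg\exists q[\ldots]$. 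The only genuinely technical lemma needed beyond what is already in the paper is the routing step, which is a minor variant of the "find $\gamma \in \mathbb{R}^o_M$ passing through $\bar\alpha t$" move used twice already.
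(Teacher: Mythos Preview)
Your proposal is correct and follows essentially the same route as the paper: invoke Lemma~\ref{converge}(1) to get $\neg\neg$-stabilization of $\Phi(\alpha)$, work under the stabilization hypothesis, apply $\WCN$ at the stabilizing coordinate to obtain a modulus $t$, pick $q$ from the initial segment $\bar\alpha t$, route any $\beta$ with $q<\beta\leq\alpha$ through $\bar\alpha t$ to get $\Phi(\alpha)\leq\Phi(\beta)$, and use Lemma~\ref{l4} for the reverse inequality. The only cosmetic difference is that the paper takes $q=\alpha(t-1)$ directly (which is cleaner than your ``$\alpha(t)-\epsilon$'' and makes the routing step immediate), and phrases the $\neg\neg$-bookkeeping via $(\phi\to\psi)\Rightarrow(\neg\neg\phi\to\neg\neg\psi)$ rather than as a reductio; the content is the same.
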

\begin{proof} Let $\alpha\in (0,1]^o$. Since
$\Phi(\alpha)\in \mathbb{N}^\ast$, by lemma \ref{converge},
$\neg\neg\exists n\forall m>n[\Phi(\alpha)(m)=\Phi(\alpha)(n)]$.
For the sake of the argument, assume  $\exists n\forall
m>n[\Phi(\alpha)(m)=\Phi(\alpha)(n)]$, and let $n_0$ be such that
$\forall m>n_0[\Phi(\alpha)(m)=\Phi(\alpha)(n_0)]$. By $\WCN$,
there is a $t$ such that for all $\beta$ in the spread $(0,1]^o$,
if $\bar{\beta}t=\bar{\alpha}t$ then
$\Phi(\beta)(n_0)=\Phi(\alpha)(n_0)$. It follows that, for all
$\beta\in(0,1]^o$ passing through
$\bar{\alpha}t=\langle\alpha0,\alpha1,...,\alpha(t-1)\rangle$,
$\Phi(\alpha)\leq\Phi(\beta)$.

\noindent Define $q:=\alpha(t-1)$. Assume that $\beta\in(0,1]^o$
and $q<\beta\leq\alpha$. Find $\lambda$ passing through
$\bar{\alpha}t$ such that $\lambda=^o\beta$,  and conclude
$\Phi(\alpha)\leq \Phi(\lambda)=^\ast\Phi(\beta)$. % Assume $m>n_0$
%and $\Phi(\lambda)(m)>\Phi(\lambda)(n_0)$. Let $s$ be such that
%for every $\delta\in(0,1]^o$, if
%$\bar{\delta}(s)=\bar{\lambda}(s)$ then
%$\Phi(\delta)(m)=\Phi(\lambda)(m)$, and therefore,
%$\neg(\Phi(\delta)=\Phi(\alpha))$. Note that since
%$\lambda\leq\alpha$, there exists $\delta$ passing through
%$\bar{\lambda}(s)$ such that $\delta=^o\alpha$, contradiction.
On the other hand, by lemma~\ref{l4}, since $\beta\leq\alpha$ we
have $\Phi(\beta)\leq\Phi(\alpha)$. Hence, for every $\beta$
satisfying $q<\beta\leq\alpha$, $\Phi(\beta)=^\ast\Phi(\alpha)$.
This conclusion is obtained from the assumption: $\exists n\forall
m>n[\Phi(\alpha)(m)=\Phi(\alpha)(n)]$. As we know $\neg\neg\exists
n\forall m>n[\Phi(\alpha)(m)=\Phi(\alpha)(n)]$, we may conclude
$\forall\alpha\in (0,1]^o\neg\neg\exists q\in
\mathbb{Q}[q<\alpha\wedge\forall\beta\in
(0,1]^o[(q<\beta\leq\alpha\ra \Phi(\alpha)=^\ast\Phi(\beta)]]$.
\end{proof}

%\begin{notation} We let
%$\mathbb{Q}[0,1]=\{q\in \mathbb{Q}\mid 0\leq q\leq 1\}$ and
%$\mathbb{Q}(0,1]=\{q\in \mathbb{Q}\mid 0< q\leq 1\}$.
%\end{notation}

Note that for any $\Phi: (0,1]^o\ra \mathbb{N}^\ast$, there
exists $k\in \mathbb{N}$ such that for all $n$,
$\Phi(\hat{1})(n)\leq k$, by definition of almost natural numbers.
\begin{proposition}\label{Tproperty}$(\WCN)$.
Let $\Phi: (0,1]^o\ra \mathbb{N}^\ast$ be total, $k\in \mathbb{N}$
be such that for all $n$, $\Phi(\hat{1})(n)\leq k$, and
$T_i=\{q\in\mathbb{Q}\mid \Phi(\hat{q})=^\ast \underline{i}\}$,
for $0\leq i\leq k$, where $\underline{i}=\langle
i,i,i,...\rangle\in \mathbb{N}^\ast$. Then

\begin{itemize}
\item[(a).] if $i<j$, $q\in T_i$ and $p\in T_j$, then $q<p$,
\item[(b).] for each $\alpha\in(0,1]^o$, if $A_\alpha\cap
T_i\#\emptyset$, then $\Phi(\alpha)\geq \underline{i}$,
\item[(c).] for each $\alpha\in(0,1]^o$, if $\Phi(\alpha)=^\ast
\underline{i}$, then $\neg (A_\alpha\cap T_i=\emptyset)$,
\item[(d).] for each $\alpha\in(0,1]^o$, $\neg\neg\exists
i[(0\leq i\leq k)\wedge (\Phi(\alpha)=^\ast \underline{i})]$.
\end{itemize}
\end{proposition}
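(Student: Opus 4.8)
The plan is to obtain parts (a) and (b) directly from the monotonicity of $\Phi$ proved in Lemma~\ref{l4}, part (d) from the almost-decidable eventual constancy of almost natural numbers (Lemma~\ref{converge}) together with the bound $k$, and to reserve the real work for part (c), which I would extract from Theorem~\ref{wim}. Throughout I use that $q\mapsto\hat q$ preserves and reflects the order of $\mathbb{Q}$, that $q\in A_\alpha$ is equivalent to $q<\alpha$ (Proposition~\ref{inequ}(1)), and that $q\in T_i$ already forces $\hat q\in(0,1]^o$, so that $\Phi(\hat q)$ is defined and Lemma~\ref{l4} is applicable to it.

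For (a) I would argue contrapositively: if $p\le q$ then $\hat p\le\hat q$, so Lemma~\ref{l4} gives $\Phi(\hat p)\le\Phi(\hat q)$, that is $\underline j\le\underline i$ in $\mathbb{N}^\ast$; unwinding the definition of $\le$ on $\mathbb{N}^\ast$ this says exactly $j\le i$, contradicting $i<j$. Hence $\neg(p\le q)$, and since the order of $\mathbb{Q}$ is decidable, $q<p$. For (b), suppose $q$ inhabits $A_\alpha\cap T_i$; then $q<\alpha$, hence $\hat q\le\alpha$, while $\Phi(\hat q)=^\ast\underline i$. Lemma~\ref{l4} gives $\underline i=^\ast\Phi(\hat q)\le\Phi(\alpha)$, i.e. $\Phi(\alpha)\ge\underline i$.

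For (d), fix $\alpha\in(0,1]^o$. By Lemma~\ref{converge}(1), $\neg\neg\exists n_0\,\forall m>n_0[\Phi(\alpha)(m)=\Phi(\alpha)(n_0)]$, so it suffices to derive $\exists i[(0\le i\le k)\wedge\Phi(\alpha)=^\ast\underline i]$ from the hypothesis that such an $n_0$ exists, and then re-wrap the conclusion in $\neg\neg$. Set $i:=\Phi(\alpha)(n_0)$. Since $\Phi(\alpha)$ is nondecreasing and eventually equal to $i$, all its values are $\le i$ and one of them equals $i$, which gives $\Phi(\alpha)=^\ast\underline i$; trivially $i\ge 0$, and $i\le k$ because $\alpha\le\hat 1$ yields $\Phi(\alpha)\le\Phi(\hat 1)$ by Lemma~\ref{l4}, so $\Phi(\alpha)(n_0)\le\Phi(\hat 1)(n)\le k$ for a suitable $n$.

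The crux is (c), and the idea is to feed Theorem~\ref{wim} through the equivalence $\neg(S=\emptyset)\leftrightarrow\neg\neg(S\#\emptyset)$. Assume $\Phi(\alpha)=^\ast\underline i$. By Theorem~\ref{wim}, $\neg\neg$ there is a rational $q<\alpha$ such that $\Phi(\beta)=^\ast\Phi(\alpha)$ for all $\beta\in(0,1]^o$ with $q<\beta\le\alpha$; it is therefore enough to show that the mere existence of such a $q$ forces $A_\alpha\cap T_i\#\emptyset$. Given $q$, I would use that $\alpha$ is strictly increasing with $\hat 0<\alpha\le\hat 1$ to choose $N$ with both $q<\alpha(N)$ and $0<\alpha(N)$, and put $p:=\alpha(N)$; then $0<p<1$, $q<p$, and $p\in A_\alpha$ since $\alpha(N)<\alpha(N+1)$, so $\hat p\in(0,1]^o$ and $q<\hat p\le\alpha$. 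Applying the property of $q$ with $\beta=\hat p$ gives $\Phi(\hat p)=^\ast\Phi(\alpha)=^\ast\underline i$, i.e. $p\in T_i$; combined with $p\in A_\alpha$ this is $A_\alpha\cap T_i\#\emptyset$. Since only $\neg\neg$ of the existence of $q$ was available, we conclude $\neg\neg(A_\alpha\cap T_i\#\emptyset)$, which is $\neg(A_\alpha\cap T_i=\emptyset)$, proving (c). I expect the only real difficulty to be this double-negation bookkeeping — in (c) and (d) one never gets the witness, only its double negation, so every downstream existential must be re-wrapped — together with the small point that in (c) the single rational $p$ has to land simultaneously above $q$, inside $(0,1]$, and inside $A_\alpha$, which is exactly what strict monotonicity of $\alpha$ and $\hat 0<\alpha\le\hat 1$ deliver.
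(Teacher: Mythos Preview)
Your proposal is correct and follows essentially the same route as the paper: (a) and (b) from Lemma~\ref{l4}, (c) from Theorem~\ref{wim}, and (d) from Lemma~\ref{converge} together with the bound $\Phi(\alpha)\leq\Phi(\hat 1)\leq\underline k$. The paper merely cites these ingredients without spelling out the details you supply, such as the choice of $p=\alpha(N)$ in (c) and the $\neg\neg$-bookkeeping; your elaboration is a faithful unpacking of the intended argument.
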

\begin{proof} (a) and (b) are derived by lemma~\ref{l4}.
(c) follows from theorem \ref{wim}, and (d) is a consequence of
the definition of almost natural numbers, and the fact
$\forall\alpha\in(0,1]^\ast(\Phi(\alpha)\leq \Phi(\hat{1})\leq
\underline{k})$.
\end{proof}

\begin{theorem} \label{theorem-ocp}$(\WCN+\KS)$. Assume
$\Phi:(0,1]^o\ra\mathbb{N}^\ast$. Let $\delta_i=inf(T_i)$, for
$T_i$s $0\leq i\leq k$, defined above, and $E=\{\delta_i\mid
0\leq i\leq k\}$. For each $\alpha \in(0,1]^o$, define
$L_\alpha=\{\gamma\in(0,1]^o\mid \gamma<\alpha\}$. Then
\begin{center} $\forall \alpha,\beta \in(0,1]^o [(L_\alpha\cap
E=L_\beta\cap E)\ra\neg\neg(\Phi(\alpha)=^\ast \Phi(\beta))]$.
\end{center}
\end{theorem}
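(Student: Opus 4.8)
Since the goal $\neg\neg(\Phi(\alpha)=^\ast\Phi(\beta))$ unfolds to ``assume $\neg(\Phi(\alpha)=^\ast\Phi(\beta))$, derive $\bot$'', the entire argument takes place with target $\bot$, and this is crucial: to prove $\bot$ from a statement $\neg\neg P$ it suffices to prove $\bot$ from $P$, so every double negation in sight may be discharged. Thus I fix $\alpha,\beta\in(0,1]^o$ with $L_\alpha\cap E=L_\beta\cap E$, assume $\neg(\Phi(\alpha)=^\ast\Phi(\beta))$, and aim at $\bot$. By Proposition~\ref{Tproperty}(d) I may fix $i,j\in\{0,\dots,k\}$ with $\Phi(\alpha)=^\ast\underline{i}$ and $\Phi(\beta)=^\ast\underline{j}$; by Theorem~\ref{wim} I may fix rationals $q_\alpha<\alpha$ and $q_\beta<\beta$ such that $\Phi(\hat r)=^\ast\underline{i}$ for every rational $r$ with $q_\alpha<r\le\alpha$, and $\Phi(\hat r)=^\ast\underline{j}$ for every rational $r$ with $q_\beta<r\le\beta$. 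Well-definedness of $\Phi$ and the standing assumption give $\neg(\underline{i}=^\ast\underline{j})$, hence $i\neq j$; by decidability of $<$ on $\mathbb N$ I may assume $i<j$, the case $j<i$ being symmetric (swap the roles of $\alpha$ and $\beta$).

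The one genuine lemma is: for $\gamma\in(0,1]^o$, if $\Phi(\gamma)=^\ast\underline{m}$ and $l>m$ then $\neg(\delta_l<\gamma)$. To see this, suppose $\delta_l<\gamma$. Unwinding Definition~\ref{o-cut} and applying Proposition~\ref{inequ}(1),(2) produces a rational $p$ with $\delta_l<p<\gamma$ (concretely, if $n$ witnesses $\delta_l<\gamma$ then $p:=\gamma(n+1)$ works, using that $\gamma(n+1)\in A_\gamma$ by monotonicity). Were $p$ a lower bound of $T_l$, then $p$ would lie in the set $B_l$ whose supremum is $\delta_l$ (Definition~\ref{infm}), and Proposition~\ref{psupremum}(1) would give $\hat p\le\delta_l$, contradicting $\delta_l<p$. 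Hence $\neg\,\forall q\in T_l\,(p\le q)$, and since $\le$ on $\mathbb Q$ is decidable this amounts to $\neg\neg\exists q\in T_l\,(q<p)$; as the ambient goal is $\bot$, I discharge this double negation and fix $q\in T_l$ with $q<p<\gamma$. Then $\hat q\le\gamma$ and $\hat q\in(0,1]^o$ (because $q\in(0,1]$), so Lemma~\ref{l4} gives $\underline{l}=\Phi(\hat q)\le\Phi(\gamma)=^\ast\underline{m}$, i.e.\ $l\le m$, contradicting $l>m$.

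Now assemble. Since $\alpha>\max(q_\alpha,0)$ and $\beta>\max(q_\beta,0)$ (both $\alpha,\beta>\hat 0$, and the maximum of two rationals is decidable), pick rationals $r_\alpha,r_\beta$ with $\max(q_\alpha,0)<r_\alpha<\alpha$ and $\max(q_\beta,0)<r_\beta<\beta$; by the choice of $q_\alpha,q_\beta$ we then have $r_\alpha\in T_i$ and $r_\beta\in T_j$, and both are positive. Because $i<j$, Proposition~\ref{Tproperty}(a) shows every element of $T_j$ exceeds $r_\alpha$, i.e.\ $r_\alpha$ is a positive lower bound of $T_j$, so $r_\alpha$ lies in the set $B_j$ with $sup(B_j)=\delta_j$ (Definition~\ref{infm}), whence $\hat 0<\hat{r_\alpha}\le\delta_j$ by Proposition~\ref{psupremum}(1). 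On the other hand $r_\beta\in T_j$, so $\delta_j=inf(T_j)\le\hat{r_\beta}<\beta$ by the infimum property; together with $\beta\le\hat 1$ this yields $\delta_j\in(0,1]^o$ and $\delta_j<\beta$, i.e.\ $\delta_j\in L_\beta\cap E$. But the lemma, applied with $\gamma=\alpha$, $m=i$, $l=j$, gives $\neg(\delta_j<\alpha)$, so $\delta_j\notin L_\alpha$ and hence $\delta_j\notin L_\alpha\cap E$ — contradicting $L_\alpha\cap E=L_\beta\cap E$. (If some $T_l$ with $0\le l\le k$ fails to be inhabited, $\delta_l=inf(T_l)$ is undefined and one simply reads $E$ as the set of those $\delta_l$ that do exist; this changes no set of the form $L_\gamma\cap E$, a ``missing'' $T_l$ behaving like $\delta_l=+\infty$.)

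The step I expect to be hardest is the lemma: passing from the oriented-real inequality $\delta_l<\gamma$ to an honest rational strictly between them, and then from $\delta_l<p$ to an actual member of $T_l$ below $\gamma$. This is where one must juggle the several orderings of Definition~\ref{o-cut}/Proposition~\ref{inequ} with care, and where it is essential that the overall goal is $\bot$: that is what lets the double negation coming from ``$\neg\,\forall q\in T_l\,(p\le q)$'' be removed without Markov's principle (we assume only \WCN and \KS here, the latter being needed even to make sense of the $\delta_i$). A lesser, recurring nuisance is checking that the auxiliary oriented reals $\hat r,\hat q,\delta_l$ genuinely belong to the spread $(0,1]^o$, which is precisely what makes Lemma~\ref{l4} applicable.
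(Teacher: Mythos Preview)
Your proof is correct and follows essentially the same strategy as the paper's: reduce via Proposition~\ref{Tproperty}(d) to constant values $\underline{i},\underline{j}$ with $i<j$, and show that the element $\delta_j$ of $E$ then separates $L_\alpha$ from $L_\beta$, contradicting the hypothesis. The differences are only in packaging. The paper records two observations, (a)~$\delta_i\in L_\alpha\Rightarrow\neg(A_\alpha\cap T_i=\emptyset)$ and (b)~$A_\alpha\cap T_i\,\#\,\emptyset\Rightarrow\delta_i\in L_\alpha$, and then runs the contradiction entirely through Proposition~\ref{Tproperty}: from $\Phi(\alpha)=^\ast\underline{i}$ it gets $A_\alpha\cap T_j=\emptyset$ by~(b) of that proposition, hence $\neg(\delta_j\in L_\alpha)$ by (a), hence $\neg(\delta_j\in L_\beta)$ by the hypothesis, hence $A_\beta\cap T_j=\emptyset$ by (b), contradicting Proposition~\ref{Tproperty}(c). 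Your ``one genuine lemma'' is exactly the paper's (a) combined with Proposition~\ref{Tproperty}(b), proved by hand; and instead of reaching the contradiction via $\neg(\delta_j\in L_\beta)$ and Proposition~\ref{Tproperty}(c), you establish $\delta_j\in L_\beta$ \emph{positively} by producing an explicit witness $r_\beta\in T_j\cap A_\beta$ from Theorem~\ref{wim} (which is precisely what underlies Proposition~\ref{Tproperty}(c)). Your route has the merit of making the use of Theorem~\ref{wim} visible and of checking carefully that $\delta_j$ actually lies in $(0,1]^o$ (a point the paper glosses over); the paper's route is shorter because it leans on the already-stated Proposition~\ref{Tproperty}.
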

\begin{proof} First, observe that for $\alpha\in (0,1]^o$
and $0\leq i\leq k$,
\begin{itemize}
\item[(a)] $\delta_i\in L_\alpha\ra \neg (A_\alpha\cap
T_i=\emptyset)$, and
\item[(b)] $A_\alpha\cap T_i\#\emptyset\ra\delta_i\in
L_\alpha$
\end{itemize}
To prove the theorem, let $\alpha, \beta\in (0,1]^o$ such that
$L_\alpha\cap E=L_\beta\cap E$ and $\neg(\Phi(\alpha)=^\ast
\Phi(\beta))$~(1). we want to derive a contradiction. By
proposition \ref{Tproperty}(d), we have $\neg\neg\exists i[(0\leq
i\leq k)\wedge (\Phi(\alpha)=^\ast \underline{i})]$~(2), and
$\neg\neg\exists i[(0\leq i\leq k)\wedge (\Phi(\beta)=^\ast
\underline{i})]$~(3). Applying the intuitionistic  valid statement
$\neg\neg(\varphi\wedge\psi)\leftrightarrow\neg\neg\varphi\wedge\neg\neg\psi$
to (1), (2) and (3), gives $\neg\neg (\exists i,j[(0\leq i,j\leq
k)\wedge (\Phi(\alpha)=^\ast \underline{i})\wedge
(\Phi(\beta)=^\ast \underline{j})\wedge \neg(\Phi(\alpha)=^\ast
\Phi(\beta)) ])$, which implies
\begin{center}
$\neg\neg( \exists i,j[(0\leq i,j\leq k)\wedge (\Phi(\alpha)=^\ast
\underline{i})\wedge (\Phi(\beta)=^\ast \underline{j})\wedge
\neg(\underline{i}=^\ast \underline{j}) ])$.
\end{center}
Now, for the sake of argument, assume
\begin{center}$\psi\equiv\exists
i,j[(0\leq i,j\leq k)\wedge (\Phi(\alpha)=^\ast
\underline{i})\wedge (\Phi(\beta)=^\ast \underline{j})\wedge
\neg(\underline{i}=^\ast \underline{j}) ]$.
\end{center}
Either $i<j$ or $j<i$, and assume the first case. By proposition
\ref{Tproperty}(b), $\neg(A_\alpha\cap T_j\#\emptyset)$, i.e,
$A_\alpha\cap T_j=\emptyset$. So by $(a)$, $\neg(\delta_j\in
L_\alpha)$, and by the assumption $L_\alpha\cap E=L_\beta\cap E$,
we have $\neg(\delta_j\in L_\beta)$. By $(b)$, $\neg (A_\beta\cap
T_j\#\emptyset)$, that is, $A_\beta\cap T_j=\emptyset$. But it
contradicts with proposition \ref{Tproperty}(c), and thus
$\neg\neg(\Phi(\alpha)=^\ast \Phi(\beta))$. By assuming $\psi$, we
derived a contradiction. By
$(\psi\ra\varphi)\ra(\neg\neg\psi\ra\neg\neg\varphi)$, assuming
$\neg\neg\psi$ yield also a contraction. So
$\neg\neg(\Phi(\alpha)=^\ast \Phi(\beta))$.\end{proof}

The  theorem says that if $\Phi$ is a total constructive function
from the temporal interval $(0,1]^o$ to $\mathbb{N}^\ast$ then
there exists a \emph{non-infinite} subset $E$ of $[0,1]^o$ such
that
\begin{center} $\forall \alpha,\beta \in(0,1]^o [(L_\alpha\cap
E=L_\beta\cap E)\ra\neg\neg(\Phi(\alpha)=^\ast \Phi(\beta))]$.
\end{center}

  As  mentioned in the introduction, a moment $\alpha$ on the
temporal continuum is a \emph{flow} from the past, i.e., the
moment $\alpha$ is adhered to the collection of all moments that
happened before it.  We  think up of this flow as $L_\alpha$.
To construct a witness for a moment $\alpha$, the occurrence of
$\alpha$ is compared with the occurrence of \emph{non-infinitely}
many moments in $E$. If the results of the comparison are the same
for two moments $\alpha$ and $\beta$, then it is not false that
the witness constructed for $\alpha$ is also a witness for
$\beta$. In this way, the theorem~\ref{theorem-ocp} formalizes our
sense of the orientation discussed in the introduction.

We believe that the following form of theorem~\ref{theorem-ocp}
is plausible to be accepted as a principle, which we name the
\emph{oriented continuity principle}, $\OCP:$

\vspace{0.2cm}

\noindent \textbf{Oriented Continuity Principle:} \vspace{-0.1in}
\[
\begin{array}{l} $For~every~$ \Phi:(0,1]^o\ra
\mathbb{N}^\ast,  \qquad\qquad\qquad\qquad\qquad\qquad\qquad\qquad\qquad\qquad\qquad\qquad\\
$if$~\forall \alpha\in (0,1]^o~\exists \xi\in\mathbb{N}^\ast
~[\Phi(\alpha)=^\ast \xi]~
$then~there~exists~a~\emph{non-infinite}~subset$~   E\subseteq
[0,1]^o~$such~that$
 \\\forall \alpha,\beta\in (0,1]^o~
  [(L_\alpha\cap E=L_\beta\cap E)\ra (\Phi(\alpha)=^\ast\Phi(\beta))]. \\

\end{array} \]

\section{Topologies For Continuum}\label{topforc}

 What is the proper topology of the intuitive temporal continuum?
Clearly, the topology must display the notion of  orientation.
As is emphasized in the introduction, since $t$ sinks back to the
past, the moment $t$  is not distinguishable from moments in
$(10:40,11:30]$. Therefore, if we have a constructive method to
introduce witnesses  for all moments on the temporal line then
the witness for the moment $t$ is also the witness for all
moments in $(10 : 40, 11 : 30]$. Thus, for a suitable topology of
the temporal continuum, it seems that every total function is
continuous. %We define  two topologies: the oriented topology and
%the ordinary topology. %????The ordinary topology is thicker than
%oriented one, but classically they are identified.

 As Brouwer distinguishes between the intuitive  continuum and the
``full continuum" of the unfinished elements  of the unit segment
(intuitionistic real line) (\cite{kn:jk}, page 74), we do not
claim that our oriented line, equipped with topologies defined
below is exactly the intuitive temporal continuum based on our
intuition. We believe that the only difference between the
intuitive continuum and the intuitive temporal continuum is
taking  into account  the notion of  \emph{orientation} as a new
aspect of the continuum besides \emph{inexhaustibility} and  \emph{non-discreteness}. Our oriented line and the following
topologies are our suggestions for modeling  the temporal
continuum.

\subsection{The Oriented Topology}

\begin{definition}\textbf{The Oriented Topology on the temporal interval $(0,1]^o$.}
A subset  $U$ of $(0,1]^o$ is called open  if and only if for
every $\alpha$ in $U$ there exists a \emph{non-infinite} set
$E\subseteq [0,1]^o$ such that $S_E(\alpha)\subseteq U$, where
$S_E(\alpha)=\{\beta\in (0,1]^o\mid L_\alpha\cap E=L_\beta\cap
E\}$. We indicate the set of all open subsets  by
$\mathcal{T}_1$, and refer to $((0,1]^o,\mathcal{T}_1)$ as the
oriented topological space.

%\noindent For a subset $W$ of $\mathbb{R}^o$, $int_o(W)=\{X\mid
%\exists\gamma_1, \gamma_2\in \mathbf{Q},~ X\in( \gamma_1, \gamma_2
%]\subseteq W\}$.
\end{definition}

\begin{example} {\em The interval $(\frac{1}{4},\frac{1}{2}]^o=\{\alpha\in (0,1]^o
\mid \hat{\frac{1}{4}} < \alpha \leq \hat{\frac{1}{2}}\}$  is
open. For $\alpha\in (\frac{1}{4},\frac{1}{2}]^o$, let
$E=\{\hat{\frac{1}{4}},\hat{\frac{1}{2}}\}$. Then $L_\alpha\cap
E=\{\hat{\frac{1}{4}}\}$, and $S_E(\alpha)\subseteq
(\frac{1}{4},\frac{1}{2}]$.}
\end{example}

We must show that the class of all open subsets of $(0,1]^o$ is a
topology. It is obvious that the empty set and $(0,1]^o$ belong to
$\mathcal{T}_1$.  One may easily see that the topology
$\mathcal{T}_1$ is closed under arbitrary union. The next
proposition shows that it is also closed under finite
intersection.

\begin{proposition}\label{topo}
The class of open subsets of $(0,1]^o$ is closed under finite
intersection.
\end{proposition}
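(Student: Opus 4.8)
The plan is to take two open sets $U_1, U_2 \in \mathcal{T}_1$ and an arbitrary point $\alpha \in U_1 \cap U_2$, and produce a single \emph{non-infinite} set $E \subseteq [0,1]^o$ witnessing openness of $U_1 \cap U_2$ at $\alpha$. By definition of $\mathcal{T}_1$, there are non-infinite sets $E_1, E_2 \subseteq [0,1]^o$ with $S_{E_1}(\alpha) \subseteq U_1$ and $S_{E_2}(\alpha) \subseteq U_2$. The natural candidate is $E := E_1 \cup E_2$. First I would check that $E$ is again non-infinite: a union of two sets each of which is a subset of a finite set is a subset of a finite set (the union of the two finite supersets), so this is immediate. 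Then I would verify the key containment $S_E(\alpha) \subseteq S_{E_1}(\alpha) \cap S_{E_2}(\alpha)$.

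For that containment, suppose $\beta \in S_E(\alpha)$, i.e. $L_\alpha \cap E = L_\beta \cap E$ where $E = E_1 \cup E_2$. I would then intersect both sides with $E_1$: since $E_1 \subseteq E$, we get $L_\alpha \cap E_1 = (L_\alpha \cap E) \cap E_1 = (L_\beta \cap E) \cap E_1 = L_\beta \cap E_1$, so $\beta \in S_{E_1}(\alpha)$; symmetrically $\beta \in S_{E_2}(\alpha)$. Hence $\beta \in S_{E_1}(\alpha) \cap S_{E_2}(\alpha) \subseteq U_1 \cap U_2$. Since $\alpha$ itself clearly lies in $S_E(\alpha)$ (as $L_\alpha \cap E = L_\alpha \cap E$), this shows $S_E(\alpha) \subseteq U_1 \cap U_2$ with $E$ non-infinite, which is exactly what openness of $U_1 \cap U_2$ at $\alpha$ requires. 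Closure under arbitrary finite intersection then follows by an obvious induction, or one simply notes binary intersection suffices since the empty intersection gives $(0,1]^o$ which is already known to be open.

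I do not expect a serious obstacle here; the argument is essentially the standard "refine the two neighborhoods by taking the union of the index sets" move, and the only point requiring a moment's care is that the relevant notion of neighborhood $S_E(\alpha)$ shrinks when $E$ grows (more constraints $L_\alpha \cap E = L_\beta \cap E$ cut out a smaller set), which is precisely why $E_1 \cup E_2$ works rather than some intersection. The one genuinely load-bearing fact outside pure set manipulation is that "non-infinite" (subset of a finite set, in the sense of \cite{kn:vel}) is closed under binary union, which holds constructively without any choice or continuity principle. So the proof is short: state $E = E_1 \cup E_2$, remark it is non-infinite, prove $S_E(\alpha) \subseteq S_{E_1}(\alpha) \cap S_{E_2}(\alpha)$ by intersecting the defining equality with $E_1$ and with $E_2$, and conclude.
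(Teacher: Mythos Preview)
Your proposal is correct and follows exactly the same approach as the paper: take $E = E_1 \cup E_2$ and observe $S_{E_1 \cup E_2}(\alpha) \subseteq U_1 \cap U_2$. The paper's proof is in fact terser than yours, merely asserting ``It is easily seen that $S_{E_1\cup E_2}(\alpha)\subseteq U_1\cap U_2$'' without spelling out the intersection argument or the non-infiniteness of the union.
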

\begin{proof}
Let $U_1$ and $U_2$ be open, and $\alpha\in U_1\cap U_2$. Let the
non-infinite sets $E_1,E_2$ be such that $S_{E_1}(\alpha)\subseteq
U_1$, and $S_{E_2}(\alpha)\subseteq U_2$. It is easily seen that
$S_{E_1\cup E_2}(\alpha)\subseteq U_1\cap U_2$.
\end{proof}

\subsection{The Ordinary Topology}
We first define the notion of semi-metric spaces, and then
introduce the \emph{ordinary} topology based on this notion.

\begin{definition}
Assume $\mathbb{Q}^+=\{q\in\mathbb{Q}\mid q>0\}$. A {\em
semi-metric space} is a couple $(I,\d)$, where $I$ is a set and \d
is a relation $\d\subseteq I\times I\times \mathbb{Q}^+$
satisfying the following properties
\begin{itemize}
\item[P1.] $\forall x\in I\forall q\in \mathbb{Q}^+ \d(x,x,q)$,

\item[P2.]$\forall x,y\in I \exists q\in \mathbb{Q}^+  \d(x,y,q)$,

\item[P3.]$\forall x,y\in I\forall q,p\in \mathbb{Q}^+
(\d(x,y,q)\wedge q<p\ra \d(x,y,p))$, \item[P4.] $\forall x,y\in
I\forall q\in \mathbb{Q}^+ (\d(x,y,q)\leftrightarrow \d(y,x,q))$,
\item[P5.] $\forall x,y,z\in I\forall q,p\in \mathbb{Q}^+
(\d(x,y,q)\wedge\d(y,z,p)\ra \d(x,z,p+q))$ \emph{(triangle
inequality)},
\end{itemize}
\end{definition}

The intended meaning of $\d(x,y,q)$ is `the distance of $x$ from
$y$ is less than $q$'. Then by this intention, all properties
P1-P5 are understood clearly. For $x\in I$ and $p\in
\mathbb{Q}^+$, let $S_p(x)=\{y\in I\mid \d(x,y,p)\}$.

%One may read $\d(x,y,q)$ as `the distance of $x$ from $y$ is less
%than $q$'.    (P1) says the distance of $x$ from itself is less
%than any positive rational number. By(P2) we mean that the
%distance of  every  two points $x,y\in E$ is less than a positive
%rational number. (P3) asserts if the distance of $x,y\in E$ is
%less than $q$ and $q<p$,
% then the distance is less than $p$.  (P4) expresses that \d is symmetric with
% respect to $E\times E$, and (P5)
% identifies  \emph{ triangle inequality }.

\begin{remark}
{\em Note that two notions of metric space and semi-metric space
are {\em classically} the same. Assume $\mathbb{R}$ is the
classical real line. For a semi-metric space $(I,\d)$, we can
define a metric function $dist: I\times I\ra \mathbb{R}$, such that
$dist(x,y)=inf \{p\mid \d(x,y,p)\}$. One may easily verify that
$dist$ is a (classical) metric. Also if $(I,dist)$ is a
(classical) metric space, defining $\d\subseteq I\times I\times
\mathbb{Q}^{+}$ by $\d(x,y,q)\leftrightarrow dist(x,y)<q$, would
make $(I,\d)$ a semi-metric space.}
\end{remark}

\begin{proposition} If $(I,\d)$ is semi-metric space, then
the collection of subsets   $U\subseteq I$ satisfying the
following property is a topology:
\begin{center}
$\forall x\in U\exists p\in \mathbb{Q}^+ (S_p(x)\subseteq
U)~~(\ast)$.
\end{center}
\end{proposition}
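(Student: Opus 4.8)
The plan is to verify the three topology axioms for the collection $\mathcal{T}$ of subsets $U\subseteq I$ satisfying property $(\ast)$: that $\emptyset$ and $I$ are in $\mathcal{T}$, that $\mathcal{T}$ is closed under arbitrary unions, and that $\mathcal{T}$ is closed under finite (equivalently, binary) intersection. The first two are essentially immediate. The set $I$ satisfies $(\ast)$ because for any $x\in I$ we may pick any $p\in\mathbb{Q}^+$ with $\d(x,x,p)$ — which exists by P1 — and then $S_p(x)\subseteq I$ trivially; the empty set satisfies $(\ast)$ vacuously. For an arbitrary union $U=\bigcup_{j} U_j$ with each $U_j\in\mathcal{T}$, given $x\in U$ we have $x\in U_j$ for some $j$, so there is $p\in\mathbb{Q}^+$ with $S_p(x)\subseteq U_j\subseteq U$, establishing $(\ast)$ for $U$.

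The one step that requires the metric structure is closure under binary intersection. Given $U_1,U_2\in\mathcal{T}$ and $x\in U_1\cap U_2$, I would choose $p_1,p_2\in\mathbb{Q}^+$ with $S_{p_1}(x)\subseteq U_1$ and $S_{p_2}(x)\subseteq U_2$, set $p=\min(p_1,p_2)$, and claim $S_p(x)\subseteq U_1\cap U_2$. To see this, take $y\in S_p(x)$, i.e. $\d(x,y,p)$. Since $p\leq p_1$, either $p=p_1$ already, or $p<p_1$ and then P3 gives $\d(x,y,p_1)$; in either case $y\in S_{p_1}(x)\subseteq U_1$. Symmetrically $y\in S_{p_2}(x)\subseteq U_2$. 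Hence $S_p(x)\subseteq U_1\cap U_2$, so $U_1\cap U_2$ satisfies $(\ast)$. Finite intersections follow by induction on the number of sets, the empty intersection being $I$ which we already handled. The only mild subtlety — and the place a constructive argument could go wrong — is that we are using the decidability of the order on $\mathbb{Q}^+$ to form $\min(p_1,p_2)$ and to split into the cases $p=p_1$ versus $p<p_1$; since $\mathbb{Q}$ has decidable order this is unproblematic. No appeal to P2, P4, or P5 is needed for this proposition; P3 (monotonicity of the distance relation in the radius) is the workhorse.

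I expect no genuine obstacle here: the proposition is the standard "metric balls form a basis" argument transcribed into the relational semi-metric language, and the relation-valued formulation was evidently designed (via P1 and P3 in particular) precisely so that this transcription goes through without choice or excluded middle. The author's stated "Proof. It is straightforward." style elsewhere in the paper suggests they will simply record the binary-intersection computation and leave the rest to the reader.
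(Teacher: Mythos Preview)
Your proof is correct and follows essentially the same route as the paper: both treat $\emptyset$, $I$, and arbitrary unions as immediate, and both handle binary intersection by taking $p=\min(p_1,p_2)$ and invoking P3 to get $S_p(x)\subseteq S_{p_1}(x)\cap S_{p_2}(x)$. You are slightly more careful than the paper in noting the case split $p=p_i$ versus $p<p_i$ needed to apply P3 (which is stated with a strict inequality), but otherwise the arguments coincide.
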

\begin{proof}
It is clear that both sets $I$ and the empty set satisfy $(\ast)$.
We only need to show that the collection is closed under arbitrary
union and finite intersection.  Being closed under arbitrary union
is trivial. We prove that it is closed under finite intersection.
Assume $U_1$ and $U_2$ satisfy the property and let $x\in U_1\cap
U_2$. Since both   $U_1$ and $U_2$ satisfy the property, there
exist $p_1,p_2\in \mathbb{Q}^+$ such that $S_{p_1}(x)\subseteq
U_1$ and $S_{p_2}(x)\subseteq U_2$. Let $q=min(p_1,p_2)$. Then
$S_{q}(x)\subseteq U_1\cap U_2$. To show this, assume $y\in
S_{q}(x)$. Then $\d(x,y,q)$ holds, and by (P3), we have
$\d(x,y,p_1)$ and $\d(x,y,p_2)$, which implies $y\in
S_{p_1}(x)\cap S_{p_2}(x)\subseteq U_1\cap U_2$. Hence $U_1\cap
U_2$ satisfies the property.
\end{proof}

\begin{proposition} Let
$\d\subseteq \mathbb{R}^o\times  \mathbb{R}^o\times \mathbb{Q}^+$,
defined by \begin{center} $\d(\alpha,\beta,q):=\exists \zeta\in
\mathbf{Q}\exists p\in \mathbb{Q}^+ (p\leq q \wedge(
\hat{\zeta}\leq\alpha,\beta\leq\hat{\zeta}+p))$~\footnote{We use
expression $a\leq x,y\leq b$ as a short abbreviation of $a\leq
x\leq b\wedge a\leq y\leq b$.}.\end{center} Then
$(\mathbb{R}^o,\d)$ is a semi-metric space.
\end{proposition}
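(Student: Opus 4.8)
The plan is to verify the five semi-metric axioms P1--P5 for the relation $\d$ defined on $\mathbb{R}^o$ via almost rationals. First I would check P1: for $\alpha\in\mathbb{R}^o$ and any $q\in\mathbb{Q}^+$, I must produce $\zeta\in\mathbf{Q}$ and $p\leq q$ with $\hat{\zeta}\leq\alpha\leq\hat{\zeta}+p$. This follows immediately from Proposition~\ref{app}: every oriented real can be approximated by almost rationals, so choose $n$ with $2^{-n}\leq q$ and take the $\zeta$ guaranteed by that proposition together with $p=2^{-n}$. For P2, given $\alpha,\beta\in\mathbb{R}^o$, pick a common upper bound $M$ and a common lower rational bound, so that there is a single $\zeta\in\mathbf{Q}$ (e.g.\ constant) and a finite $p$ with $\hat{\zeta}\leq\alpha,\beta\leq\hat{\zeta}+p$; this shows $\exists q\,\d(\alpha,\beta,q)$. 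Properties P3 (monotonicity in the radius) and P4 (symmetry) are immediate from the shape of the defining formula: P3 because if $p\leq q<p'$ then $p\leq p'$, and P4 because the condition $\hat{\zeta}\leq\alpha,\beta\leq\hat{\zeta}+p$ is already symmetric in $\alpha$ and $\beta$.

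The real work is the triangle inequality P5: assuming $\d(\alpha,\beta,q)$ and $\d(\beta,\gamma,p)$, I must produce a single $\zeta''\in\mathbf{Q}$ and $r\leq p+q$ with $\hat{\zeta''}\leq\alpha,\gamma\leq\hat{\zeta''}+r$. From the hypotheses I get $\zeta$ with $\hat{\zeta}\leq\alpha,\beta\leq\hat{\zeta}+q_0$ (some $q_0\leq q$) and $\zeta'$ with $\hat{\zeta'}\leq\beta,\gamma\leq\hat{\zeta'}+p_0$ (some $p_0\leq p$). The natural candidate is to take $\zeta''$ to be (an almost-rational representative of) the pointwise minimum of $\zeta$ and $\zeta'$, so that $\hat{\zeta''}\leq\hat{\zeta}$ and $\hat{\zeta''}\leq\hat{\zeta'}$, hence $\hat{\zeta''}\leq\alpha$ and $\hat{\zeta''}\leq\gamma$. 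The upper bound then needs $\alpha\leq\hat{\zeta''}+(p_0+q_0)$ and $\gamma\leq\hat{\zeta''}+(p_0+q_0)$: one argues via $\beta$ as the pivot, using $\alpha\leq\hat{\zeta}+q_0$, $\hat{\zeta}\leq\hat{\zeta'}+p_0$ (which should follow from $\hat{\zeta}\leq\beta\leq\hat{\zeta'}+p_0$), and then $\alpha\leq\hat{\zeta'}+p_0+q_0$; combined with $\alpha\leq\hat{\zeta}+q_0\leq\hat{\zeta''}+\text{(something)}$ one wants to land at $\hat{\zeta''}+(p_0+q_0)$. Setting $r=p_0+q_0\leq p+q$ then finishes P5.

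I expect the main obstacle to be the bookkeeping around the $-\frac{1}{n+1}$ offsets in the embedding $\zeta\mapsto\hat{\zeta}$ and around the min construction: one has to confirm that the pointwise minimum of two almost-rational sequences is again an increasing sequence with image in a finite set (so that it lies in $\mathbf{Q}$), that $\widehat{\min(\zeta,\zeta')}$ relates correctly to $\hat{\zeta}$ and $\hat{\zeta'}$ under $\leq$, and that the inequalities of the form $\hat{\zeta}\leq\hat{\zeta'}+p_0$ can be extracted constructively from $\hat{\zeta}\leq\beta$ and $\beta\leq\hat{\zeta'}+p_0$ using transitivity of $\leq$ on $\mathbb{R}^o$ together with the fact that $\hat{\zeta}+p_0$ (as defined by $(\hat\zeta+p_0)(n)=\hat\zeta(n)+p_0$) specifies the shifted cut. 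Since $\leq$ on $\mathbb{R}^o$ is the cut-inclusion order and translation by a rational is monotone, transitivity holds, so the argument goes through, but the sign-and-offset chase is where care is needed. All the remaining steps are routine, so the proof reduces to assembling these observations; the authors likely just say ``straightforward,'' but the triangle-inequality step deserves the explicit pivot argument sketched above.
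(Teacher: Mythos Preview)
Your proposal is correct and matches the paper's proof essentially step for step: P1 via Proposition~\ref{app}, P2 via a constant almost-rational below a common lower bound, P3 and P4 trivial, and P5 via the pointwise minimum $\zeta(n)=\min(\zeta_1(n),\zeta_2(n))$ together with the pivot-through-$\beta$ argument you sketch. Contrary to your guess, the authors do spell out the triangle inequality explicitly (in exactly the way you outline), including the step $\hat{\zeta_1}\leq\beta\leq\hat{\zeta_2}+p_1$ to push $\alpha$ under $\hat{\zeta_2}+p_1+q_1$ and then under $\hat{\zeta}+p_1+q_1$.
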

\begin{proof}
We must show that $\d$ satisfies properties P1-P5.
\begin{itemize}
\item P1. It follows from proposition \ref{app}.
\item P2. Let $\alpha,\beta$ be two oriented reals. There exist  rational
numbers $p_1,p_2,q_1,q_2$ such that $p_1 <\alpha< p_2 ,q_1<\beta <
q_2$. Let $\zeta\in \mathbf{Q}$ defined by $\zeta(n)=min(p_1,q_1)$
for each $n\in \mathbb{N}$. Then
\begin{center}
$\hat{\zeta} \leq\alpha,\beta\leq
(\hat{\zeta}+|min(p_1,q_1)|+max(p_2,q_2))$.
\end{center}
\item P3. Trivial.
\item P4. Trivial.
\item P5. Assume
$\alpha,\beta,\delta\in \mathbb{R}^o$ and for some $p,q\in
\mathbb{Q}$, $\d(\alpha,\beta,q)$ and $\d(\beta,\delta,p)$ hold.
Then there exist $\zeta_1,\zeta_2\in \mathbf{Q}$, $q_1\leq q$ and
$p_1\leq p$, such that
\begin{center}
$\hat{\zeta_1}\leq\alpha,\beta\leq(\hat{\zeta}_1+q_1)$
\end{center}
and
\begin{center}
$\hat{\zeta_2}\leq\beta,\delta\leq(\hat{\zeta_2}+p_1)$.
\end{center}
Let $\zeta(n)=min(\zeta_1(n),\zeta_2(n))$, for each $n$. We claim
that
\begin{center}
$\hat{\zeta}\leq\alpha,\delta\leq
(\hat{\zeta}+(p_1+q_1))$.
\end{center}
From $\hat{\zeta_1}\leq \alpha$ and $\hat{\zeta_2}\leq \alpha$,
we derive $\hat{\zeta}\leq \alpha$. We also derive
$\hat{\zeta}\leq \delta$ similarly. Assume an arbitrary
$t\in\mathbb{N}$. The fact $\alpha\leq \hat{\zeta_1}+q_1$ implies
that there exists $k\in \mathbb{N}$ such that
$\alpha(t)<\hat{\zeta_1}(k)+q_1$. On the other hand, since
$\hat{\zeta_1}\leq \beta$ and $\beta\leq \hat{\zeta_2}+p_1$,
 there exists $k'\in \mathbb{N}$ such that
$\hat{\zeta_1}(k)< \hat{\zeta_2}(k')+p_1$, and consequently
$\alpha(t)<\hat{\zeta_2}(k')+p_1+q_1$. Let $k''=max(k,k')$. Since
both of $\hat{\zeta_1},\hat{\zeta_2}$ are strictly increasing, we
have $\hat{\zeta_1}(k)< \hat{\zeta_1}(k''),~\hat{\zeta_2}(k')<
\hat{\zeta_2}(k'')$, and hence
$\alpha(t)<\hat{\zeta_1}(k'')+q_1+p_1$ and
$\alpha(t)<\hat{\zeta_2}(k'')+p_1+q_1$. Then
$\alpha(t)<(min(\zeta_1(k''),\zeta_2(k''))-\frac{1}{k''+1})+p_1+q_1$.
It is shown that $\alpha\leq \hat{\zeta}+p_1+q_1$. Similar
argument works for $\beta\leq \hat{\zeta}+p_1+q_1$.
\end{itemize}
\end{proof}

\begin{definition}\textbf{The Ordinary Topology on $\mathbb{R}^o$.}
The \emph{ordinary} topology on $\mathbb{R}^o$ is the topology
induced by the semi-metric $(\mathbb{R}^o,\d)$, where $\d$ is
defined above. We show the \emph{ordinary} topological space by
$(\mathbb{R}^o,\mathcal{T}_2)$
\end{definition}

%\begin{proposition} The oriented topology is {\em thinner} than the
%ordinary topology, i.e., $\mathcal{T}_2\subseteq \mathcal{T}_1$.
%\end{proposition}
%\begin{proof} We must show that for every subset
%$U\subseteq \mathbb{R}^o$, if $U\in \mathcal{T}_2$ then $U\in
%\mathcal{T}_1$. Since $U\in\mathcal{T}_2$, for every $X\in U$
%there exists $p\in \mathbb{Q}$ such that $S_p(X)\subseteq U$, so
%there exist $\gamma\in \mathbf{Q}$ and $q\in \mathbb{Q}$ such that
%$X\in (\gamma,\gamma+q]\wedge (\gamma,\gamma+q]\subseteq U$. Then
%$X\in int_o(U)$, and so $U\in \mathcal{T}_1$.
%\end{proof}

%\begin{remark} {\em Two topologies $\mathcal{T}_1, \mathcal{T}_2$ are
%classically identical, i.e., $\mathcal{T}_1= \mathcal{T}_2$. It is
%because $\mathbf{Q}$ and \Q are classically the same.}
%\end{remark}
\subsection{A consequence  of OCP }
We use \OCP to prove that:
%every total   function from $((0,1]^o,
%\mathcal{T}_1)$ to $(\mathbb{R}^o, \mathcal{T}_2)$ is continuous.
\begin{theorem}\label{totalc}Every total function from
$((0,1]^o, \mathcal{T}_1)$ to $(\mathbb{R}^o, \mathcal{T}_2)$ is
continuous.
\end{theorem}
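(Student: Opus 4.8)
The plan is to reduce continuity of a total function $F:((0,1]^o,\mathcal{T}_1)\ra(\mathbb{R}^o,\mathcal{T}_2)$ to the oriented continuity principle by composing $F$ with suitable ``coordinate'' maps into $\mathbb{N}^\ast$, and then pulling back basic $\mathcal{T}_2$-neighborhoods through $\OCP$-witnessing sets $E$. First I would fix $\alpha\in(0,1]^o$ and a basic open $\mathcal{T}_2$-neighborhood of $F(\alpha)$, i.e. an $\varepsilon=2^{-m}$-ball $S_\varepsilon(F(\alpha))=\{\gamma\in\mathbb{R}^o\mid \d(F(\alpha),\gamma,\varepsilon)\}$; it suffices to produce a non-infinite $E\subseteq[0,1]^o$ with $S_E(\alpha)\subseteq F^{-1}(S_\varepsilon(F(\alpha)))$. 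By proposition~\ref{app}, pick $\zeta\in\mathbf{Q}$ with $\hat{\zeta}\leq F(\alpha)\leq\hat{\zeta}+2^{-m-2}$, and then the truth of $\d(F(\alpha),F(\beta),\varepsilon)$ can be certified by checking, for finitely many rationals near $\zeta$ on a $2^{-m-2}$-grid, which ones are below $F(\beta)$ and which are not — equivalently by computing the ``position of $F(\beta)$ relative to a finite grid refining $\hat{\zeta}$''. This is exactly the kind of data encoded by an almost-natural-number-valued function.

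Concretely, for each grid rational $d$ (finitely many, chosen once from the bounded range of $\zeta$ together with the ambient bound $M$ on $(0,1]^o$) define $\Phi_d:(0,1]^o\ra\mathbb{N}^\ast$ by the recipe used in the proof that a non-constant total map exists, applied to $F$: $\Phi_d(\beta)(n)=1$ if $d<F(\beta)(\,\cdot\,)$ has been witnessed by stage $n$ (reading off an enumeration of $A_{F(\beta)}$, available since $F(\beta)\in\mathbb{R}^o$) and $0$ otherwise; each $\Phi_d$ is total and well-defined because $F$ is. Apply $\OCP$ to the finite tuple of these $\Phi_d$'s — or, packaging the tuple into a single $\mathbb{N}^\ast$-valued function by interleaving — to obtain a single non-infinite set $E\subseteq[0,1]^o$ such that $L_\alpha\cap E=L_\beta\cap E$ forces $\Phi_d(\alpha)=^\ast\Phi_d(\beta)$ simultaneously for all grid rationals $d$. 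Then for $\beta\in S_E(\alpha)$ the oriented reals $F(\alpha)$ and $F(\beta)$ sit on the same side of every grid rational, hence both lie in $[\hat{\zeta},\hat{\zeta}+2^{-m-1}]$ up to the grid resolution, whence $\d(F(\alpha),F(\beta),\varepsilon)$ holds; that is, $S_E(\alpha)\subseteq F^{-1}(S_\varepsilon(F(\alpha)))$, which is what we needed.

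The main obstacle I anticipate is the bookkeeping that makes $\Phi_d$ genuinely a \emph{total, well-defined} map $(0,1]^o\ra\mathbb{N}^\ast$ rather than something only defined up to double negation: one must verify that from $F(\beta)\in\mathbb{R}^o$ one constructively gets the monotone bounded $\{0,1\}$-sequence ``$d$ seen below $F(\beta)$ by stage $n$'' and that this respects $=^o$ on the domain and $=^\ast$ on the codomain, using proposition~\ref{inequ}. A secondary subtlety is that $\OCP$ as stated yields a single $\mathbb{N}^\ast$-target function, so I must combine the finitely many $\Phi_d$ into one (e.g. $\Phi(\beta)=\sum_d 3^{\text{(index of }d)}\Phi_d(\beta)$, which is still in $\mathbb{N}^\ast$ since it is nondecreasing and bounded) and check that equality of the combined values recovers equality of each component — routine but needing care. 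Finally one must confirm that the resulting $E$ is still \emph{non-infinite} (it is: a finite union of the non-infinite sets returned by $\OCP$, or the single set if the functions were pre-combined), so that $S_E(\alpha)$ is a legitimate $\mathcal{T}_1$-basic neighborhood of $\alpha$.
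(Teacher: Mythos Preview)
Your strategy is essentially the paper's: discretise $F$ through a finite rational grid to obtain a total well-defined map $(0,1]^o\to\mathbb{N}^\ast$, invoke \OCP, and read off $\d(F(\alpha),F(\beta),2^{-n})$ from $\Phi(\alpha)=^\ast\Phi(\beta)$. The paper carries this out more directly: with a fixed grid $q_i=i\,2^{-n}$ it sets $\Phi(\delta)(k)=i$ iff $q_i\le f(\delta)(k)<q_{i+1}$, so a \emph{single} $\mathbb{N}^\ast$-valued function does the job, and the $\d$-witness is manufactured \emph{after} \OCP\ as $\zeta_1(i):=q_{\Phi(\alpha)(i)}$; since $\Phi(\alpha)=^\ast\Phi(\beta)$ one gets $\zeta_1=\zeta_2$ and hence $\hat\zeta_1\le f(\alpha),f(\beta)\le\hat\zeta_1+2^{-n}$ immediately. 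Your route---one $\{0,1\}$-valued $\Phi_d$ per grid point, then recombine---is equivalent (indeed $\sum_d\Phi_d$ is exactly the paper's $\Phi$), but the extra packing/unpacking is avoidable.

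There is one genuine slip to fix. You write ``the ambient bound $M$ on $(0,1]^o$'', but $(0,1]^o$ is the \emph{domain}; the codomain is all of $\mathbb{R}^o$ and carries no uniform bound. Your a-priori $\zeta$ from Proposition~\ref{app} only pins down $F(\alpha)$, and the grid rationals you extract from ``the bounded range of $\zeta$'' all lie \emph{below} $F(\alpha)$; knowing that $F(\beta)$ agrees with $F(\alpha)$ on the side of each such $d$ gives no upper control on $F(\beta)$. You must include at least one grid rational $d^+$ with $F(\alpha)\le d^+$ (available since $F(\alpha)\in\mathbb{R}^o$ has a rational upper bound, depending on $\alpha$), so that $d^+\notin A_{F(\alpha)}$ forces $d^+\notin A_{F(\beta)}$ and hence $F(\beta)\le d^+$. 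With that patch your argument goes through; alternatively, drop the pre-chosen $\zeta$ altogether and build the $\d$-witness from $\Phi(\alpha)$ as the paper does, which sidesteps the issue entirely.
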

\begin{proof} Let $f: ((0,1]^o, \mathcal{T}_1)\ra
(\mathbb{R}^o, \mathcal{T}_2)$ be total. We prove that for each
$n\in \mathbb{N}$, for every $\alpha\in(0,1]^o$, there exists
$S_{E_n}(\alpha)\in \mathcal{T}_1$ such that  for every $\beta\in
S_{E_n}(\alpha)$, $\d(f(\alpha),f(\beta),2^{-n})$.
%That is,
%\begin{center}$\forall n\in \mathbb{N} \forall\alpha\in(0,1]^o\exists S_{E_n}(\alpha)\in
%\mathcal{T}_1\forall \beta\in(0,1]^o[\beta\in S_{E_n}(\alpha)\ra
%\d(f(\alpha),f(\beta),2^{-n})]$.
%\end{center}
Consider rational numbers $q_i=i2^{-n}$, $0\leq i\leq 2^{n}$. For
each $\delta\in(0,1]^o$, define $\phi(f(\delta))(0)=0$ and
$\phi(f(\delta))(k)=i$ if and only if $q_i\leq
f(\delta)(k)<q_{i+1}$. The  function
$\Phi(\delta)=\varphi(f(\delta))$ from $(0,1]^o$ to
$\mathbb{N}^\ast$ is total and well-defined. By \OCP, there exists
a non-infinite subset $E\subseteq [0,1]^o$, such that for every
$\alpha,\beta$, if $\beta\in S_E(\alpha)$ then
$\Phi(\alpha)=^\ast\Phi(\beta)$. It easily seen that for
$\zeta_1(i)=q_{\Phi(\alpha)(i)}\in \mathbf{Q}$ and
$\zeta_2(i)=q_{\Phi(\beta)(i)}\in \mathbf{Q}$, we have
$\zeta_1=\zeta_2$ and
\begin{center}
$\hat{\zeta_1}\leq
f(\alpha),f(\beta)\leq(\hat{\zeta}_1+2^{-n})$,
\end{center} i.e.,
$\d(f(\alpha),f(\beta),2^{-n})$.
\end{proof}

%\begin{question} {\em Is every total function
%from $(\mathbb{R}^o, \mathcal{T}_1)$ to $(\mathbb{R}^o,
%\mathcal{T}_1)$, or from $(\mathbb{R}^o, \mathcal{T}_2)$ to
%$(\mathbb{R}^o, \mathcal{T}_2)$ continuous?}
%\end{question}

\section{Concluding Remarks}

  Our temporal continuum suggests a new framework for constructive
analysis.  We  need to investigate  the following questions as further work:
\begin{enumerate}

\item Is every total function on $\mathbb{R}^o$ with {\em one}
topology (either the oriented topology or the ordinary one)
continuous, or do we need a third topology?
\item What happens to the \emph{uniform
continuity theorem}?
\item What does the \emph{intermediate value theorem} looks like?
\end{enumerate}
\vspace{0.5cm}

\textbf{Acknowledgement}. We are very thankful to Dirk van Dalen,
Mark van Atten and Wim Veldman for reading a draft of this paper
and for their  helpful comments and suggestions. Moreover, our
correspondence with Wim Veldman resulted in   modifications of
some crucial claims in this paper.


\begin{thebibliography}{10}

\bibitem{kn:ARR} M. Ardeshir and R. Ramezanian,
{\em Decidability and Specker Sequences  in Intuitionistic
Mathematics}. Math. Log. Quart. 55, No. 6, 637-648 (2009).

\bibitem{kn:Ars} Aristotle, {\bf Physics}, Oxford University Press, Oxford, 1999.
Translation by R. Waterfield.

\bibitem{kn:va} M. van Atten, {\em Brouwer meets Husserl. On the Phenomenology of choice
sequences}. Springer, Berlin, 2006.

\bibitem{kn:BW} M.~van~Atten, D.~van~Dalen, and R.~Tieszen,
{\em Brouwer and Weyl: the phenomenology and mathematics of the
intuitive continuumm}, Philosophia Mathematica (3) Vol. 10 (2002),
pp. 203-226.

\bibitem{kn:Br} L. E. J. Brouwer.
{\em On the foundation of mathematics}, PhD thesis, University of
Amsterdam, 1907, in \cite{kn:Br2}.

\bibitem{kn:Br2} L. E. J. Brouwer.
{\bf Collected works}, vol. 1, ed. by A. Heyting, North-Holland,
1975.

\bibitem{kn:ca} G. Cantor.
{\em Foundations of a general theory of manifolds: A
mathematico-philosophical investigation into the theory of the
infinite}, 1883, in \cite{kn:ew}.

\bibitem{kn:conn} A. Connes, C. Consani. \emph{The hyperring of adele classes}, to appear
in Journal of Number Theory, arXiv:1001.4260 [mathAG,NT].

\bibitem{kn:conn2} A. Connes, C. Consani. \emph{From monoids to hyperstructures: in search of
an absolute arithmetic}, arXiv:1006.4810v1 [math.AG].

\bibitem{kn:DU}  M.~Dummett.
{\em Bringing about the past}, The Philosophical Review, Vol. 73,
No. 3. (Jul., 1964), pp. 338-359.

\bibitem{kn:ew} W. Ewald.
{\bf From Kant to Hilbert, A sourse book in the foundations of
mathematics}, vol. 2, Oxford Science Publications, 1996.


\bibitem{kn:Eud} T.L. Heath. {\bf The thirteen Books of Euclid's Elements}, Volume 1.
Dover, New York, 1956. Unabridged republication of the second
edition from 1925.

\bibitem{kn:krasn} M. Krasner, {\em A class of hyperrings and hyperfields}. Internat. J.
Math. Math. Sci. 6 (1983), no. 2, 307-311.

\bibitem{kn:jk} J.J.C.Kuiper,
{\em IDEAS AND EXPLORATIONS, Brouwer's Road to Intuitionism}. PhD
thesis, Utrecht University, 2004.

\bibitem{kn:im} I. Martel. {\em Probabilistic Empricism: in
defence of a Reichenbachian theory of causality and the direction
of time}, PhD thesis, University of Colorado at Boulder, USA,
2000.

\bibitem{kn:Rei} H. Reichenbach, {\bf The Direction of Time},  University of California
Press, Berkeley California, 1956.


\bibitem{kn:TD} A. S. Troelstra, D. van Dalen,
{\bf Constructivism in Mathematics, An introduction}, Vol. 1,
North-Holland, 1988.

\bibitem{kn:vel} W.Veldman,
{\em Some intuitionisitic variations on the notion of a finite set
of natural numbers}, in: H.C.M. de Swart, L.J.M. Bergmans (ed.),
Perspectives on Negation, Essays in honour of Johan J. de Iongh on
his 80th birthday, Tilburg University Press, Tilburg, 1995, pp.
177-202.

\bibitem{kn:weyl} H. Weyl,
{\em The Continuum: a critical examination of the foundation of
analysis}, Translated by Stephen Pollard and Thomas Bole, Dover
Publications, INC, New York, 1994, Originally published in 1918.
\end{thebibliography}
\end{document}